\documentclass[11pt]{amsart}
\textheight 8.5in \textwidth 6.5in \evensidemargin .1in \oddsidemargin .1in \topmargin .25in \headsep .1in \headheight 0.2in \footskip .5in 

\usepackage{graphicx}
\usepackage[mathcal]{euscript}
\usepackage{float}
\usepackage{cite}
\usepackage{verbatim}
\usepackage{url}
\usepackage{tikz}
\usepackage{adjustbox}
\usepackage{amsmath}
\usepackage{multicol}
\usepackage{rotating}
\usepackage{enumitem}
\usepackage{subcaption}
\usepackage{verbatim}

\title{On Khovanov homology and related invariants}

\usepackage{graphicx}
\usepackage{amsmath, amssymb}
\usepackage{hyperref}

\hypersetup{
    colorlinks=true,
    linkcolor=red,
    filecolor=magenta,      
    urlcolor=red,
}
\definecolor{mzpink}{RGB}{219, 48, 122}
\definecolor{blue(pigment)}{rgb}{0.2, 0.2, 0.6}

\definecolor{aog}{rgb}{0.0, 0.5, 0.0}


\newcommand{\R}{\mathbb{R}}


\newtheorem{theorem}{Theorem}
\newtheorem{lemma}[theorem]{Lemma}

\newtheorem*{corollaryconj*}{Corollary of Conjecture \ref{conjecture:Signature}}
\newtheorem{proposition}[theorem]{Proposition}

\theoremstyle{definition}

\newtheorem{example}[theorem]{Example}


\newcommand{\pLee}{p_{\textnormal{Lee}}}

\newcommand{\dthin}{d_{\textnormal{thin}}}

\def\alt{\operatorname{alt}}

\def\pLee{\operatorname{pg}_{\textnormal{Lee}}}
\def\pBN{\operatorname{pg}_{\textnormal{BN}}}
\def\dthin{d_{\textnormal{thin}}}

\def\dlee{d_{\textnormal{Lee}}}

\def\dBN{d_{\textnormal{BN}}}

\usetikzlibrary{decorations.pathreplacing}
\usetikzlibrary{arrows.meta}

\usetikzlibrary{arrows, shapes, decorations, decorations.markings, backgrounds, patterns, hobby, knots, calc, positioning}

\tikzset{invclip/.style={clip,insert path={{[reset cm]
      (-16383.99999pt,-16383.99999pt) rectangle (16383.99999pt,16383.99999pt)
    }}}}


\author[C. Caprau]{Carmen Caprau}
\thanks{CC was partially supported by Simons Foundation Collaboration Grant $355640$}
\address{Department of Mathematics, California State University, Fresno, CA 93740}
\email{ccaprau@csufresno.edu}

\author[N. Gonz\'{a}lez]{Nicolle  Gonz\'{a}lez}
\address{UCLA Department of Mathematics, 520 Portola Plaza, Los Angeles, CA 90095}
\email{nicolle@math.ucla.edu}

\author[C. Lee]{Christine Ruey Shan Lee}
\thanks{CL is supported in part by NSF Grant DMS 1907010}
\address{Department of Mathematics and Statistics, University of South Alabama, Mobile AL 36608}
\email{crslee@southalabama.edu}

\author[A. Lowrance]{Adam M. Lowrance}
\thanks{AL was supported in part by NSF Grant DMS 1811344.}
\address{Department of Mathematics and Statistics,
Vassar College, Poughkeepsie, NY 12604}
\email{adlowrance@vassar.edu}

\author[R. Sazdanovi\'{c}]{Radmila Sazdanovi\'{c}}
\thanks{RS partially supported by the Simons Foundation Collaboration Grant 318086  and NSF Grant DMS 1854705.}
\address{Department of Mathematics, North Carolina State University, Raleigh, NC 27695}
\email{rsazdan@ncsu.edu}

\author[M.\ Zhang]{Melissa Zhang}
\address {Department of Mathematics, University of Georgia, Athens, GA 30602}
\email{melissa.zhang@uga.edu}

\begin{document}

\maketitle

\begin{abstract}
This paper begins with a survey of some applications of Khovanov homology to low-dimensional topology, with an eye toward extending these results to $\mathfrak{sl}(n)$ homologies. We extend Levine and Zemke's ribbon concordance obstruction from Khovanov homology to $\mathfrak{sl}(n)$ homology for $n \geq 2$, including the universal $\mathfrak{sl}(2)$ and $\mathfrak{sl}(3)$  homology theories. Inspired by Alishahi and Dowlin's bounds for the unknotting number coming from Khovanov homology and relying on spectral sequence arguments, we produce bounds on the alternation number of a knot. Lee and Bar-Natan spectral sequences also provide lower bounds on Turaev genus.
\end{abstract}

\section{Introduction}

The discovery of the Jones polynomial \cite{Jones} has invigorated low-dimensional topology by introducing a plethora of link and 3-manifold invariants. Efforts to study these quantum invariants have yielded powerful new link invariants, in the form of homology theories, through categorification. In this article, we focus on the impact of the most influential homology theory arising from quantum invariants: Khovanov homology \cite{Kh00}. Our goal is to sample some recent applications of Khovanov-type theories to smooth low-dimensional topology. By bringing together the various ideas and constructions, we hope to facilitate new applications.

In Section~\ref{s.survey}, we curate a survey of recent developments in knot concordance, mutation detection, unknotting, and the categorification of knot polynomials.
Note that our overview will focus on Lee's spectral sequence and Rasmussen's $s$-invariant, and generalizations of these constructions.
We will exclude results linking Khovanov homology to knot Floer homology or Heegaard Floer homology, for which the readers may consult the resources \cite{OS05}, \cite{Baldwin}, \cite{LoZe}. We also exclude applications toward low-dimensional contact and symplectic geometry.

Following the survey, we give two new applications. 
In Section~\ref{s.homologyconcordance}, we extend Levine-Zemke's \cite{LZ} ribbon concordance obstruction from Khovanov homology to $\mathfrak{sl}(n)$ homology for $n \geq 2$, as well as to universal $\mathfrak{sl}(2)$ and $\mathfrak{sl}(3)$ homology theories. More generally, we show that a ribbon concordance between links induces injective maps on link homologies defined via webs and foams modulo relations. Kang provides a different approach in \cite{Kang}, where it is shown that a ribbon concordance induces injective maps on link homology theories that are multiplicative link TQFTs and which are either associative or Khovanov-like. Our proof relies mainly on the fact that all of the homology theories considered in Section~\ref{s.homologyconcordance} satisfy certain cutting neck and sphere relations in the category of dotted cobordisms, without the need to provide new definitions or develop special techniques.

In Section~\ref{section:Spectral}, we  use spectral sequences coming from Khovanov homology to bound the alternation number, as well as the Turaev genus of a knot in $S^3$.

We hope that this article provides a convenient reference to those entering this area of research and sparks interest in the subject.

\subsection*{Acknowledgements}
We would like to thank ICERM and the organizers of the Women in Symplectic and Contact Geometry and Topology workshop (WiSCon) for the opportunity to work on this project during the summer of 2019.

\section{A survey of applications of Khovanov homology}
\label{s.survey}

\subsection{Rasmussen's $s$-invariant}
Possibly the most well-known application of the original Khovanov homology \cite{Kh00} lies in Rasmussen's \cite{Ras10} concordance invariant $s$, which comes from a spectral sequence arising from a filtration on the Lee complex. The Lee spectral sequence is a key ingredient of the proof that the Khovanov homology of alternating knots is thin \cite{Lee:Endo}.
Rasmussen shows that $s$ induces a homomorphism from the concordance group to the integers. Therefore, it provides a slice obstruction. 
In fact, $s$ gives lower bounds on the slice genus of a knot. As an example of an application, he uses this to give a strikingly short proof of the Milnor conjecture, which was previously proven by Kronheimer and Mrowka using gauge theory \cite{KM-Khovanov-unknot}. 

Many others have since modeled the algebraic construction of Rasmussen's invariant to produce more concordance invariants, many of which are generalizations of the $s$-invariant to $\mathfrak{sl}(n)$ homology (\cite{Lobb}, \cite{Wu}, \cite{Lobb2}, \cite{Lewark}) or to the universal $\mathfrak{sl}(2)$ homology \cite{Caprau2012}. 

In 2012, Lipshitz and Sarkar introduced a stable homotopy type for Khovanov homology \cite{LS}. In \cite{LS-Sq}, they define a refinement of $s$ for each stable cohomology operation, and show that the refinement corresponding to $Sq^2$ is in general stronger than $s$ (see also \cite{Seed-Sq}).

\subsection{Mutants}

Mutant knots are notoriously difficult to distinguish using knot invariants. It has been shown that for a knot, mutation preserves the signature, the Alexander polynomial, the volume (if the mutants in question are hyperbolic), and the Jones polynomials \cite{CL99}, \cite {Mor88}, \cite{Rub87}. It is an open question whether Khovanov homology is invariant under mutation on knots. While there exist mutant \emph{links} with distinct Khovanov homologies (see \cite{wehrli-03}), it has been shown that odd Khovanov homology and Khovanov homology with $\mathbb{F}_2$ coefficients are invariant under mutation; for details, we refer the reader to \cite{Blo10} and \cite{Weh10}, respectively.

There has been some recent indication that Khovanov-type theories may be used to distinguish mutants. For example, a prominent open problem was resolved when Piccirillo showed that the Conway knot is not slice, using the $s$-invariant \cite{Piccirillo} defined by Rasmussen from the spectral sequence from Khovanov homology to Lee homology. Lobb-Watson's \cite{Lobb-Watson-involution} filtered invariant is able to detect mutants in the presence of an involution. In a different direction, one may also consider generalized mutations along genus 2 surfaces from which  (Conway) mutation may be recovered. It has been shown that Khovanov homology distinguishes a pair of generalized mutants, while the signature, HOMFLY-PT polynomial, Jones polynomials, and Kauffman polynomial are the same \cite{DGST10}.

\subsection{Ribbon Concordance}
Motivated by Gordon's conjecture~\cite{Gordon-ribbon} that ribbon concordance gives a partial ordering on knots in $S^3$, there has been great interest in studying the behavior of knot invariants under ribbon concordance. Notably, in 2019, Zemke \cite{Zemke-ribbon-HFK} showed that knot Floer homology obstructs ribbon concordance. This led to an exciting series of papers extending this result to various homology-type invariants for knots. Within the realm of Khovanov-type invariants, Levine-Zemke~\cite{LZ} extended the result to the original Khovanov homology, Kang~\cite{Kang} extended the result to a setup that includes Khovanov-Rozansky homologies~\cite{KR08}, knot Floer homologies and other theories, and Sarkar~\cite{Sarkar-ribbon} defined the notion of ribbon distance and derived bounds on this from Khovanov-Lee homology.

\subsection{Unknotting and unlinking via spectral sequences} Besides the $s$-invariant and its relationship to the slice genus, one can also relate spectral sequences from Khovanov homology to other link invariants. Alishahi and Dowlin \cite{AD:Lee} proved that the page at which the Lee spectral sequence collapses can be used to give a lower bound on the unknotting number of the knot. A consequence of this bound is that the Knight Move conjecture holds for all knots with unknotting number at most two. Alishahi also proved a similar lower bound for the unknotting number using the Bar-Natan spectral sequence coming from the characteristic two Khovanov homology \cite{Alishahi:BN}. In another direction, Batson and Seed \cite{BS:LinkSplit} constructed a spectral sequence starting with the Khovanov homology of a link and converging to the Khovanov homology of the disjoint union of its components. The page at which this spectral sequence collapses yields a lower bound on the link splitting number of the link.

\subsection{$\mathfrak{sl}(n)$ homology and HOMFLY-PT homology}
For each $n$, the $\mathfrak{sl}(n)$ link invariant is a certain one-variable specialization of the HOMFLY-PT polynomial. In \cite{KR08}, Khovanov and Rozansky gave a categorifiction of the $\mathfrak{sl}(n)$ polynomial using matrix factorizations. Moreover, using matrix factorizations with a different potential, Khovanov and Rozansky \cite{KR082} constructed a categorification of the HOMFLY-PT polynomial. For the $\mathfrak{sl}(3)$ link invariant, Khovanov \cite{Kh04} constructed another categorification using trivalent webs and foams between such webs. This was later generalized to the universal $\mathfrak{sl}(3)$ homology by Mackaay and Vaz \cite{MV07}. An approach to the universal $\mathfrak{sl}(2)$ homology theory was constructed by Caprau  \cite{Caprau09}, using a combination of ideas from \cite{BN05} and \cite{Kh04}.  In \cite{MSV}, Mackaay, Stosic, and Vaz gave a topological categorification of the $\mathfrak{sl}(n)$ polynomial, for all $n\geq 4,$ via webs and a special type of foams. For specific details on the versions of $\mathfrak{sl}(n)$ homologies that are used in this paper, we refer the reader to Section \ref{s.homologyconcordance}. A potential topological application of $\mathfrak{sl}(n)$ and HOMFLY-PT homologies is that  they would be better able to distinguish mutant knots, due in part to the fact that the corresponding decategorifications can detect mutants \cite{Morton}, \cite{MortonRyder}. 

\section{Link homologies and ribbon concordance} \label{s.homologyconcordance}

Let $L_0$ and $L_1$ be links in $S^3$. A \textit{concordance} $C \subset S^3\times [0,1]$ from $L_0$ to $L_1$ is an embedding $f: S^1 \times [0,1] \to S^3 \times [0, 1]$ such that $f(S^1 \times \{0\}) = L_0 \times \{0\}$ and $f(S^1 \times \{1\}) = L_1 \times \{1\}$. In this case, we say that the links $L_0$ and $L_1$ are \textit{concordant}.

By a small isotopy of $S^3\times [0,1]$, the concordance $C$ may be adjusted so that the restriction to $C$ of the projection $S^3\times [0,1] \to [0,1]$ is a Morse function.
If this Morse function has only critical points of index $0$ (local minima) and $1$ (saddle points) (that is, if it has no critical points of index $2$ (local maxima)), then $C$ is called a \textit{ribbon concordance}. In this case, we say that $L_0$ is \textit{ribbon concordant} to $L_1$.

Denote by $\overline{C}$ the mirror image of $C$ and regard it as a concordance from $L_1$ to $L_0$. Then $\overline{C} \circ C$ is the concordance from $L_0$ to itself obtained by concatenating $C$ and $\overline{C}$. Zemke~\cite{Zemke-ribbon-HFK} proved that the concordance $\overline{C} \circ C$ can be obtained by taking the identity concordance $L_0 \times [0, 1]$ and ``tubing in'' unknotted, unlinked $2$-spheres $S_1, \dots, S_n$ using ``tubes" $T_1, \dots, T_n$.
The tubes are annuli embedded in $S^3 \times [0, 1]$, joining $L_0 \times [0, 1]$ with the spheres $S_1, \dots, S_n$. Specifically, Zemke~\cite{Zemke-ribbon-HFK} explained that the concordance $\overline{C} \circ C$ can be described, up to isotopy, by the following movie presentation:
\begin{itemize}
\item $n$ births of disjoint unknots $U_1, \dots , U_n$, each of which being disjoint from the link $L_0$;
\item $n$ saddles represented by bands $B_1,  \dots , B_n$, such that $B_i$ connects $U_i$ with $L_0$;
\item $n$ saddles represented by bands $\overline{B}_1,  \dots , \overline{B}_n$, where each $\overline{B}_i$ is respectively the mirror image (dual) of $B_i$;
\item $n$ deaths, deleting $U_1, \dots , U_n$.
\end{itemize}
The embedded annuli $T_i$ are obtained by concatenating the second and third movie frames above, that is, by joining the bands $B_i$ together with their respective dual bands, $\overline{B}_i$. The births and deaths of the unknots $U_1, \dots , U_n$ determine $n$ unknotted, unlinked $2$-spheres $S_1, \dots, S_n$. The annuli $T_i$ are the boundaries of some three-dimensional $1$-handles $h_i$, and each handle $h_i$ intersects the surface $L_0 \times [0, 1]$ and the sphere $S_i$ in some disks $D_i$ and $D_i'$, respectively.
Then, the concordance $\overline{C} \circ C$ can be thought of as the following union:
\[\overline{C} \circ C = \big ((L_0 \times [0, 1]) \diagdown (D_1 \cup \dots \cup D_n) \big )  \cup (T_1 \cup \dots \cup T_n)\cup \big ((S_1 \diagdown D_1') \cup \dots \cup (S_n \diagdown D_n')   \big ). \]

The goal of this section is to use the above result by Zemke~\cite{Zemke-ribbon-HFK} to show that a ribbon concordance between two links induces an injective map on $\mathfrak{sl}(n)$ link homologies, for all $n \geq 2$. That is, we want to show that the main result proved by Levine and Zemke in~\cite{LZ} can be generalized to universal Khovanov homology, as well as to higher rank link homologies. The proofs of the following statements are similar in nature to the proofs of the analogous statements provided in~\cite{LZ}.

Here we are considering $\mathfrak{sl}(n)$ foam homologies, which we will denote by $\mathcal{H}_n$. For $n=2$ and $n = 3$, we are working with the corresponding universal theories (`universal' in the sense as explained by Khovanov's work~\cite{Kh06}). The universal theory categorifying the $\mathfrak{sl}(2)$ link polynomial corresponds to a Frobenius system of rank two associated to the ring $\mathcal{A}_2 = \mathbb{Z}[X, h, t]/(X^2 - hX -t)$, where $h$ and $t$ are formal parameters. The homology of the unknot is the ring $\mathcal{A}_2$, and the homology of that of the empty link is the ground ring $\mathbb{Z}[X, h, t]$. To obtain a homology theory that is purely functorial with respect to link cobordisms, Caprau~\cite{Caprau09} worked with singular cobordisms and with the ground ring $\mathbb{Z}[i][X, h, t]$, where $i^2 = -1$. 
Similarly, the universal $\mathfrak{sl}(3)$ foam theory, introduced by Mackaay and Vaz in~\cite{MV07}, corresponds to a Frobenius system of rank three associated to the ring $\mathcal{A}_3 = \mathbb{Z}[X, a, b, c]/(X^3-aX^2-bX-c)$, where $a, b$, and $c$ are formal parameters. The work in~\cite{MV07} generalizes Khovanov's construction in~\cite{Kh04}. 
For $n \geq 4$, we consider the homology theory introduced by Mackaay, Sto\v{s}i\'{c}, and Vaz in~\cite{MSV}, which corresponds to the ring $\mathcal{A}_n = \mathbb{Q}[X]/(X^n)$. The foams in~\cite{MSV} are more complicated than those for the cases of $n = 2$ and $3$, as these foams have additional types of singularities and their evaluation makes use of the Kapustin-Li formula~\cite{KaLi}.

These homology theories use foams modulo local relations, as pioneered by Bar-Natan~\cite{BN05} in his approach to local Khovanov homology for tangles. In each case of the $\mathfrak{sl}(n)$ homology theory considered here for a fixed value of $n \geq 2$, one associates to a link diagram a formal chain complex in a certain abelian category $\text{Kom}(\textbf{Foam}_n)$, whose objects are column vectors of closed $1$-manifolds in the plane, and whose morphisms are matrices of dotted foams in $\mathbb{R}^2 \times [0, 1]$, which are considered up to boundary-preserving isotopies, and modulo local relations.

For our purpose, for each $\mathfrak{sl}(n)$ foam homology theory for $n \geq 2$, we will only need the local relations involving ordinary surfaces and (1+1)-cobordisms in $\mathbb{R}^2 \times [0, 1]$ marked with dots. Specifically, we will employ the sphere relations (S$_n$) and the cutting neck relation (CN$_n$), for fixed $n \geq 2$, depicted in Fig.~\ref{local relations}. In this figure, a letter $i$ on a surface means that the surface is marked with $i$ dots. Recall that in terms of the $2$-dimensional TQFT associated with the corresponding Frobenius extension and the resulting $\mathfrak{sl}(n)$ homology theory for links, a dot on a surface corresponds to the endomorphism of the ring $\mathcal{A}_n$ that is multiplication by $X$. The sphere relations (S$_n$) are the geometric counterparts of the evaluations of the counit map $\epsilon: \mathcal{A}_n \to R$ on the generators $1, X, \dots , X^{n-1}$, where $R$ is the ground ring. Moreover, the cutting neck relation (CN$_n$), for each $n\geq 2$, is the geometric representation of the formula for $\Delta(1)$, where $\Delta: \mathcal{A}_n \to \mathcal{A}_n \otimes_R \mathcal{A}_n$ is the comultiplication map corresponding to the Frobenius system defining the $2$-dimensional TQFT.

\begin{figure}[ht]
\[
\begin{picture}(30,30)
\raisebox{3pt}{\scalebox{0.25}{\includegraphics{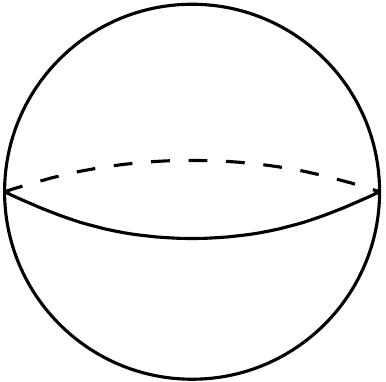}}}
\put(-15,21){\fontsize{9}{10.8}$i$}
\end{picture} 
\raisebox{12pt}{\ \ = \ $\begin{cases} 1,\ \  i = n - 1 \\ 0, \  \ \text{otherwise} \end{cases}$ \quad (S$_n$)  }
\hspace{1.5cm}
 \raisebox{-25pt}{\includegraphics[height = 0.85in]{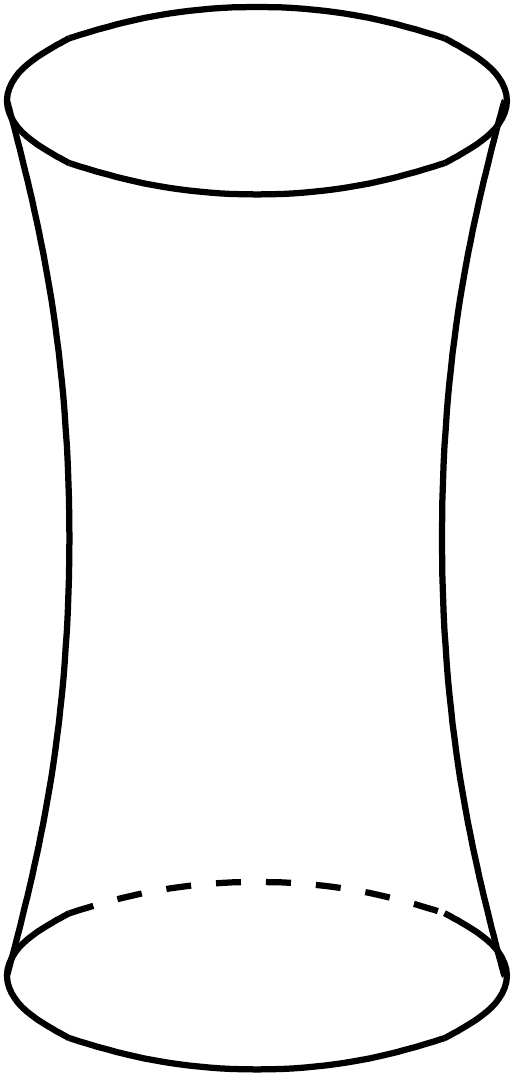}}  =   \sum_{i = 0}^{n-1} \raisebox{-25pt}{\includegraphics[height = 0.85in]{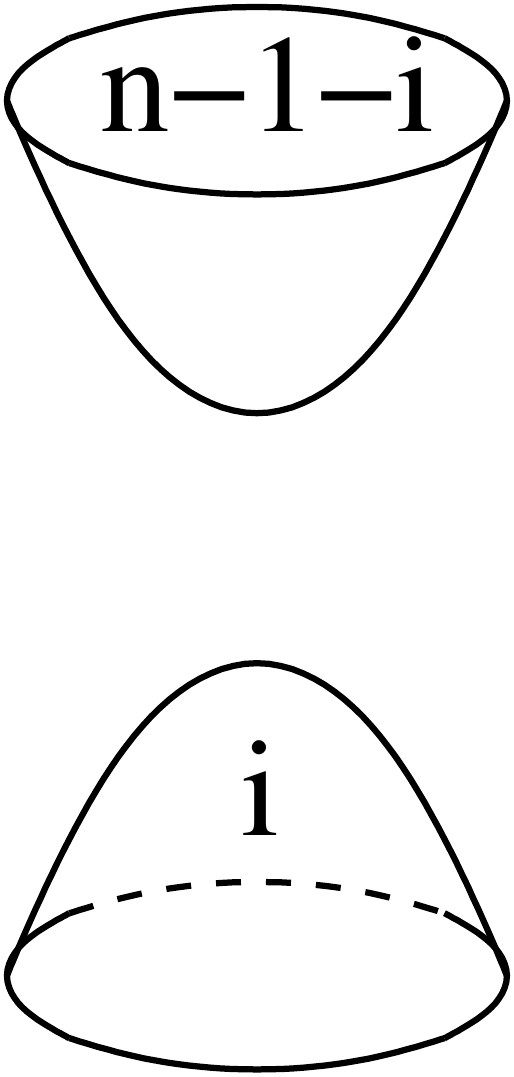}}
 \quad (\text{CN}_n) 
\]
\caption{The local relations (S$_n$) and (CN$_n$). }
\label{local relations}
\end{figure}

We denote by $\mathcal{T}_n$, for $n \geq 2$, the tautological functors in the above homology theories.
Recall that these functors are multiplicative with respect to disjoint unions of objects, as well as with respect to disjoint unions of morphisms, in the geometric categories $\textbf{Foam}_n$, for $n \geq 2$. 
It was proved in~\cite{Caprau09} that the universal $\mathfrak{sl}(2)$ homology theory satisfies the functoriality property with respect to link (and tangle) cobordisms without sign ambiguity. Clark~\cite{Cl} also proved that Khovanov's $\mathfrak{sl}(3)$ homology theory is properly functorial. Moreover, it was explained in~\cite{MV07} that the universal $\mathfrak{sl}(3)$ homology theory is functorial at least up to a minus sign (that is, up to multiplication by a unit in $\mathbb{Z}$). Finally, recall that the $\mathfrak{sl}(n)$ homology theory, for $n \geq 4$, is functorial (at least) up to multiplication by a non-zero complex number, as shown in~\cite{MSV}. Note that for the purpose of this paper, it suffices that a certain $\mathfrak{sl}(n)$ foam homology theory is functorial up to multiplication by a unit in the ground ring.

For the remainder of this section, embedded link cobordisms in $\mathbb{R}^3 \times [0, 1]$ may possibly be decorated with dots.

\begin{lemma}\label{spheres-cob}
Let $F \subset \mathbb{R}^3 \times [0, 1]$ be an embedded cobordism from a link $L_0$ to a link $L_1$.
Let $S$ be an unknotted $2$-sphere in $\mathbb{R}^3 \times [0, 1]$ and unlinked from $F$, and denote by $S^{(k)}$ the sphere $S$ marked with $k$ dots. Then, 
\begin{enumerate}
\item[(a)] $\mathcal{H}_2(F \cup S) = 0$ and $\mathcal{H}_2(F \cup S^{(1)}) = \mathcal{H}_2(F)$.
\item[(b)] $\mathcal{H}_3(F \cup S) = 0 = \mathcal{H}_3(F \cup S^{(1)})$ and $\mathcal{H}_3(F \cup S^{(2)}) = - \mathcal{H}_3(F)$.
\item[(c)] $\mathcal{H}_n(F \cup S) = \mathcal{H}_n(F \cup S^{(1)}) = \dots = \mathcal{H}_n(F \cup S^{(n-2)}) =  0$, and 

\hspace{-0.45cm}$\mathcal{H}_n(F \cup S^{(n-1)}) = \mathcal{H}_n(F)$, where $n \geq 4$.
\end{enumerate}
\end{lemma}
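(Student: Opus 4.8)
The plan is to reduce everything to the evaluation of a single dotted $2$-sphere and then invoke the sphere relation (S$_n$). Since $S$ is an unknotted $2$-sphere unlinked from $F$, an ambient isotopy of $\mathbb{R}^3 \times [0,1]$ carries $S$ to a standard round sphere contained in a small ball $B$ disjoint from $F$, and this isotopy respects the (possible) dots on $S$. Because $\mathcal{H}_n$ is invariant under such isotopies of cobordisms, we may assume from the outset that $F \cup S^{(k)}$ is the \emph{disjoint union} $F \sqcup S^{(k)}$, with $S^{(k)}$ a closed dotted sphere lying in its own ball.

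I would then conclude in either of two equivalent ways. First, using the multiplicativity of the tautological functor $\mathcal{T}_n$ (hence of $\mathcal{H}_n$) with respect to disjoint unions of objects \emph{and} of morphisms recalled above: writing $R$ for the ground ring, which is the homology $\mathcal{H}_n(\emptyset)$ of the empty link, the closed dotted sphere $S^{(k)}\colon \emptyset \to \emptyset$ induces an $R$-linear endomorphism of $R$, that is, multiplication by a scalar $\lambda_{n,k}\in R$; under the identification $\mathcal{H}_n(L_i)\otimes_R R \cong \mathcal{H}_n(L_i)$ this gives $\mathcal{H}_n(F\cup S^{(k)}) = \mathcal{H}_n(F)\otimes_R \mathcal{H}_n(S^{(k)}) = \lambda_{n,k}\,\mathcal{H}_n(F)$. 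Equivalently, at the chain level the foams representing $F\cup S^{(k)}$ differ from those for $F$ only by the disjoint sphere component inside $B$, so (S$_n$) applied locally in $B$ rewrites each such foam as $\lambda_{n,k}$ times the corresponding foam for $F$. Either way it remains to compute $\lambda_{n,k}$, which is precisely the value of a $k$-dotted sphere: by (S$_n$), $\lambda_{n,k}=0$ for $0\le k \le n-2$, while $\lambda_{n,n-1}$ is a unit in the ground ring, equal to $1$ for $n=2$ and for $n\ge 4$ and to $-1$ for $n=3$ with the sign convention of the universal $\mathfrak{sl}(3)$ foam theory of~\cite{MV07}. Substituting these values into $\lambda_{n,k}\,\mathcal{H}_n(F)$ yields exactly (a), (b), and (c).

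The step I expect to demand the most care is the opening reduction: one must verify that ``unknotted and unlinked'' genuinely allows one to separate $S$ into its own ball using only the isotopies under which $\mathcal{H}_n$ is invariant, so that multiplicativity of $\mathcal{T}_n$ (equivalently, the local form of (S$_n$)) applies on the nose to the resulting disjoint-union foams. A secondary point worth noting is that the sign ambiguity in the functoriality of the $\mathfrak{sl}(3)$ theory and of the $\mathfrak{sl}(n)$ theories with $n\ge 4$ is harmless here, since $\lambda_{n,k}$ is read off from a \emph{closed} dotted foam, whose evaluation via (S$_n$) carries no ambiguity. Beyond these points the argument is a direct application of the sphere relation and requires no new constructions, in keeping with the section's strategy of leaning on the cutting neck and sphere relations rather than developing special techniques.
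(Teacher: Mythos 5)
Your proposal is correct and matches the paper's argument: the paper likewise isotopes the unknotted, unlinked sphere $S$ into a standard position (a slice $\mathbb{R}^3\times\{t\}$ where it sits apart from $F$), applies the sphere relation (S$_n$) locally, and concludes via the multiplicativity and functoriality of $\mathcal{T}_n$ and $\mathcal{H}_n$, exactly as you do. Your explicit bookkeeping of the scalar $\lambda_{n,k}$, including the sign $-1$ for the two-dotted sphere in the universal $\mathfrak{sl}(3)$ theory, is just a more detailed rendering of the same proof.
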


\begin{proof}
If necessary, we may perform an ambient isotopy of $\mathbb{R}^3 \times [0, 1]$ so that the unknotted $2$-sphere $S$ lies in a slice $\mathbb{R}^3 \times \{t\}$, for some $t \in [0,1]$, and that the intersection of $F$ with $\mathbb{R}^3 \times \{t\}$ is a $(1+1)$-cobordism. Then, the result in part (a) follows from the sphere relations (S$_2$) and the properties of the functors $\mathcal{T}_2$ and $\mathcal{H}_2$. Similarly, the sphere relations (S$_3$) and separately (S$_n$), for $n \geq 4$, together with the application of the functors $\mathcal{T}_3$ and $\mathcal{T}_n$, for $n \geq 4$, (along with the fact that $\mathcal{H}_3$ and $\mathcal{H}_n$ are functors) yield the equalities in parts (b) and (c).
 \end{proof}

\begin{lemma}\label{tubes-cobs}

Let $F \subset \R^3 \times [0,1]$ be an embedded cobordism from a link $L_0 \subset \R^3 \times \{0\}$ to a link $L_1 \subset \R^3 \times \{1\}$. Let $\gamma$ be a smoothly embedded arc with endpoints on $F$ and otherwise disjoint from $F$, and let $T$ be the boundary of an embedded tubular neighborhood of $\gamma$ (that is, $T$ is an annulus). Let $F'$ be the result of removing the neighborhood of $\partial \gamma$ from $F$ and attaching $T$.
Denote by $F^{(i, j)}$ the cobordism obtained from $F'$ by surgery  along a non-trivial compressing disk of $T$.

Then,
\begin{enumerate}
\item[(a)] $\mathcal{H}_2(F') = \mathcal{H}_2(F^{(1, 0)}) + \mathcal{H}_2(F^{(0, 1)}) -h \, \mathcal{H}_2(F)$.
\item[(b)] $ - \mathcal{H}_3(F') = \mathcal{H}_3(F^{(2, 0)}) + \mathcal{H}_3(F^{(1, 1)}) +  \mathcal{H}_3(F^{(0, 2)})$

\hspace{1.3cm}$ - a [\mathcal{H}_3(F^{(1, 0)}) + \mathcal{H}_3(F^{(0, 1)})] - b \mathcal{H}_3(F)$.

\item [(c)] $\mathcal{H}_n(F') = \displaystyle \sum_{i = 0}^{n-1} \mathcal{H}_n(F^{(i, n- 1 -i)})$, where $n \geq 4$.
\end{enumerate}
\end{lemma}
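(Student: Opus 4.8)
The plan is to deduce all three parts from a single application of the cutting neck relation $(\mathrm{CN}_n)$ of Figure~\ref{local relations}, in the same spirit as the reduction of Lemma~\ref{spheres-cob} to the sphere relations $(\mathrm{S}_n)$. Since $(\mathrm{CN}_n)$ is the geometric form of the comultiplication $\Delta\colon\mathcal{A}_n\to\mathcal{A}_n\otimes_R\mathcal{A}_n$ applied to $1$, and a dot records multiplication by $X$, I would first record, via a direct computation with the trace form of each Frobenius system,
\[
\Delta(1)=X\otimes 1+1\otimes X-h\,(1\otimes 1)\qquad\text{for }\mathcal{A}_2,
\]
\[
\Delta(1)=-\big(X^2\otimes 1+X\otimes X+1\otimes X^2\big)+a\,(X\otimes 1+1\otimes X)+b\,(1\otimes 1)\qquad\text{for }\mathcal{A}_3,
\]
where the overall sign reflects the normalization $\epsilon(X^2)=-1$ visible in Lemma~\ref{spheres-cob}(b), and
\[
\Delta(1)=\sum_{i=0}^{n-1}X^i\otimes X^{n-1-i}\qquad\text{for }\mathcal{A}_n,\ n\geq 4 .
\]
Geometrically, the tensor factor $X^i\otimes X^j$ corresponds to splitting a cross-sectional circle of a tube into two disks, one carrying $i$ dots and the other $j$ dots.

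Next I would isotope $F'$ in $\mathbb{R}^3\times[0,1]$ so that the annulus $T$ contains a \emph{neck}: a cross-sectional circle that, in a movie presentation of $F'$ computing $\mathcal{H}_n$, persists unchanged over a short interval of time inside a small ball meeting nothing else in $F'$. This is available because $\gamma$ is embedded and meets $F$ only at its endpoints, so the tube $T$ can be put in standard position relative to the height function. On that neck the identity cobordism is exactly the cylinder foam on the left of $(\mathrm{CN}_n)$; applying $(\mathrm{CN}_n)$ there produces an equality in $\textbf{Foam}_n$, and since $(\mathrm{CN}_n)$ is a local relation and $\mathcal{H}_n$ is computed one movie frame at a time through the tautological functor $\mathcal{T}_n$, this equality propagates to
\[
\mathcal{H}_n(F')=\sum\big(\text{coefficient of }X^i\otimes X^j\text{ in }\Delta(1)\big)\cdot\mathcal{H}_n\big(F'_{i,j}\big),
\]
where $F'_{i,j}$ denotes $F'$ with the neck of $T$ cut and the two resulting circles capped by disks bearing $i$ and $j$ dots.

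It remains to identify $F'_{i,j}$ with $F^{(i,j)}$. Cutting a cross-sectional circle of $T$ and capping the two ends is, by definition, surgery along the (unique up to isotopy) non-trivial compressing disk of $T$, followed by the placement of dots, which is precisely the definition of $F^{(i,j)}$. For the term coming from $1\otimes 1$, which occurs only when $n=2$ or $n=3$, the cobordism $F^{(0,0)}$ is the undotted surgery, and this is ambient isotopic to $F$, since shrinking the two capped half-tubes back along $\gamma$ to the small disks they replaced is an isotopy; hence $\mathcal{H}_2(F^{(0,0)})=\mathcal{H}_2(F)$ and $\mathcal{H}_3(F^{(0,0)})=\mathcal{H}_3(F)$. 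Substituting the three forms of $\Delta(1)$ above into the displayed relation (and multiplying through by $-1$ when $n=3$) then yields formulas (a), (b), and (c); cocommutativity of $\Delta$ is what makes both $F^{(i,j)}$ and $F^{(j,i)}$ appear in the sums. The one step requiring genuine care is the first one: making precise that the neck of $T$ can be isolated so that $(\mathrm{CN}_n)$ acts as a strictly local modification of the movie, and that functoriality of $\mathcal{H}_n$ carries the resulting identity in $\textbf{Foam}_n$ to the stated linear relation among cobordism maps — with the understanding, for $n\geq 4$, that the maps $\mathcal{H}_n$ are only defined up to a unit, so the identity is to be read for a compatible system of choices, and, for $n=3$, with attention to the sign convention $\epsilon(X^2)=-1$.
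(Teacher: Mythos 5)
Your proposal is correct and follows essentially the same route as the paper's proof: isotope $F'$ so that the neck of $T$ sits in a standard local position, apply the cutting neck relation $(\mathrm{CN}_n)$ there, and use the tautological functors and functoriality of $\mathcal{H}_n$ to convert the resulting identity in $\textbf{Foam}_n$ into the stated relation among the induced maps, identifying the capped-off surgeries with $F^{(i,j)}$ (and $F^{(0,0)}$ with $F$). The only difference is that you spell out the $\Delta(1)$ formulas and the sign convention $\epsilon(X^2)=-1$ explicitly, which the paper leaves implicit in its reference to the relations $(\mathrm{CN}_n)$.
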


\begin{proof}
The proof is similar to that of Lemma~\ref{spheres-cob}, only that now we make use of the cutting neck relations. We perform first an isotopy of $\mathbb{R}^3 \times [0, 1]$ so that $T$ lies in a small ball contained in a slice $\mathbb{R}^3 \times \{t\}$, for some $t \in [0,1]$, and the intersections of $F'$ and $F^{(i, j)}$ with the ball can be identified with the pictures depicted in the cutting neck relations. The cutting neck relations imply that the morphisms in $\textbf{Foam}_n$ corresponding to the cobordisms in the statement of the lemma (where $n = 2$ in part (a), $n = 3$ in part (b), and $n \geq 4$ in part (c)) satisfy the skein relations in the statement. Then, the claimed identities on the homology groups follow at once from these, and from the properties of the tautological functors $\mathcal{T}_n$, and since $\mathcal{H}_n$ is a functor, for each $n \geq 2$.
\end{proof}

\begin{proposition}\label{cutting annulus}
Let $D \subset \R^3 \times [0,1]$ be an embedded cobordism from a link $L_0 \subset \R^3 \times \{0\}$ to a link $L_1 \subset \R^3 \times \{1\}$. Suppose $S$ is an unknotted $2$-sphere in $\R^3 \times [0,1]$ and unlinked from $D$.
Let $\gamma$ be a smoothly embedded arc with one endpoint on $F$ and the other on $S$, and otherwise disjoint from $D \cup S$, and let $T$ be the boundary of an embedded tubular neighborhood of $\gamma$ (that is, $T$ is an annulus). Let $D'$ be the result of removing the neighborhood of $\partial \gamma$ from $D \cup S$ and attaching $T$.

Then $\mathcal{H}_n(D') = \mathcal{H}_n(D)$, for all $n \geq 2$.
\end{proposition}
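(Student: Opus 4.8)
The plan is to read off the proposition from Lemmas~\ref{tubes-cobs} and~\ref{spheres-cob}. Apply Lemma~\ref{tubes-cobs} with $F = D \cup S$ and with the arc $\gamma$ and annulus $T$ of the proposition; then the cobordism $F'$ of that lemma is precisely $D'$. The one geometric point to pin down is the identification of the surgered cobordisms $F^{(i,j)}$. Surgering $D'$ along a non-trivial compressing disk of the tube $T$ cuts out an annular neighborhood and caps the two resulting boundary circles with parallel disks: one disk fills the hole $\gamma$ opened in $D$ and the other fills the hole $\gamma$ opened in $S$. Up to boundary-preserving isotopy the result is the disjoint union of $D$, now decorated with $i$ dots (which land on the component of $D$ met by $\gamma$), and the unknotted $2$-sphere $S$ decorated with $j$ dots; write this as $F^{(i,j)} \cong D^{(i)} \sqcup S^{(j)}$. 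Since $S$ stays unknotted and unlinked from $D$ after this surgery, Lemma~\ref{spheres-cob} applies to each $D^{(i)} \sqcup S^{(j)}$.

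Next I would substitute these identifications into the three skein relations of Lemma~\ref{tubes-cobs}. In every term that appears, the two caps together carry a total of $n-1$ dots, so the cap lying on $S$ carries fewer than $n-1$ dots unless \emph{all} $n-1$ dots lie on it, in which case the cap on $D$ carries none. By Lemma~\ref{spheres-cob}, $\mathcal{H}_n(D^{(i)} \sqcup S^{(k)}) = 0$ for $0 \le k \le n-2$, while $\mathcal{H}_n(D \sqcup S^{(n-1)}) = \mathcal{H}_n(D)$ for $n = 2$ and $n \ge 4$, and $\mathcal{H}_3(D \sqcup S^{(2)}) = -\mathcal{H}_3(D)$. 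Hence for $n \ge 4$ the sum $\sum_{i=0}^{n-1}\mathcal{H}_n(F^{(i,n-1-i)})$ collapses to its single nonzero summand, namely the $i = 0$ term $\mathcal{H}_n(D \sqcup S^{(n-1)}) = \mathcal{H}_n(D)$, so $\mathcal{H}_n(D') = \mathcal{H}_n(D)$. For $n = 2$, every term of Lemma~\ref{tubes-cobs}(a) vanishes except $\mathcal{H}_2(F^{(0,1)}) = \mathcal{H}_2(D \sqcup S^{(1)}) = \mathcal{H}_2(D)$ (note $\mathcal{H}_2(F) = \mathcal{H}_2(D \cup S) = 0$ by Lemma~\ref{spheres-cob}(a), so the $h\,\mathcal{H}_2(F)$ term is killed too), giving $\mathcal{H}_2(D') = \mathcal{H}_2(D)$. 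For $n = 3$, every term of Lemma~\ref{tubes-cobs}(b) vanishes except $\mathcal{H}_3(F^{(0,2)}) = \mathcal{H}_3(D \sqcup S^{(2)}) = -\mathcal{H}_3(D)$ (the $a[\cdots]$ and $b\,\mathcal{H}_3(F)$ terms all contain an unlinked sphere with at most one dot, hence vanish), so $-\mathcal{H}_3(D') = -\mathcal{H}_3(D)$ and again $\mathcal{H}_3(D') = \mathcal{H}_3(D)$.

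The only substantive step is the geometric identification $F^{(i,j)} \cong D^{(i)} \sqcup S^{(j)}$, i.e.\ recognizing that cutting the tube and capping off recovers the original cobordism and the original sphere as \emph{separate} components, merely redistributing dots; the rest is mechanical substitution. It is worth remarking that the universal parameters $h$ (for $n = 2$) and $a,b$ (for $n = 3$) drop out automatically, because each appears multiplying a cobordism that contains an undotted unlinked $2$-sphere, which is null-homologous for every $n \ge 2$. I expect the write-up to be brief, the only real care being the bookkeeping of the sign in the $\mathfrak{sl}(3)$ case.
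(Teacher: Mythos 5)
Your proof is correct and follows essentially the same route as the paper: apply Lemma~\ref{tubes-cobs} to $F = D \cup S$ with $F' = D'$, identify $F^{(i,j)} = D^{(i)} \cup S^{(j)}$, and then use Lemma~\ref{spheres-cob} to kill every term except the one where all $n-1$ dots sit on the sphere, with the sign in the $\mathfrak{sl}(3)$ case cancelling exactly as you describe. The paper's write-up is just the same substitution carried out explicitly for $n=2$, $n=3$, and $n\geq 4$.
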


\begin{proof}
The proof follows from Lemmas~\ref{spheres-cob} and~\ref{tubes-cobs}. We apply first Lemma~\ref{tubes-cobs} to the cobordism $F: = D \cup S$, with $F': = D'$. Then note that $F^{(i, j)} = D^{(i)} \cup S^{(j)}$, where $D^{(i)}$ is the cobordism $D$ marked with $i$ dots, and $S^{(j)}$ is the 2-sphere $S$ marked with $j$ dots. So, we have
\begin{eqnarray*}
\mathcal{H}_2(D') &=& \mathcal{H}_2(D^{(1)} \cup S) + \mathcal{H}_2(D \cup S^{(1)}) -h \, \mathcal{H}_2(D \cup S)\\
&=& 0 + \mathcal{H}_2(D) -h \cdot 0\\
&=& \mathcal{H}_2(D) ,
\end{eqnarray*}
where the second equality holds due to part (a) in Lemma~\ref{spheres-cob}.
Similarly, using part (b) from Lemma~\ref{tubes-cobs}, we get,
\begin{eqnarray*}
 - \mathcal{H}_3(D') &=& \mathcal{H}_3(D^{(2)} \cup S) + \mathcal{H}_3(D^{(1)} \cup S^{(1)}) +  \mathcal{H}_3(D \cup S^{(2)})\\
 &&- a [\mathcal{H}_3(D^{(1)} \cup S) + \mathcal{H}_3(D \cup S^{(1)})] - b \mathcal{H}_3(D \cup S).
\end{eqnarray*}
Using part (b) from Lemma~\ref{spheres-cob}, we see that only the third term above, $\mathcal{H}_3(D \cup S^{(2)})$,  survives and equals to $-\mathcal{H}_3(D)$. Hence, $ \mathcal{H}_3(D') = \mathcal{H}_3(D)$, as desired.

Moreover, the following equalities follow from parts (c) of the previous two lemmas:
\begin{eqnarray*}
\mathcal{H}_n(D') =  \sum_{i = 0}^{n-1} \mathcal{H}_n(D^{(i)} \cup S^{(n- 1 - i)})
=\mathcal{H}_n(D \cup S^{(n-1)}) + 0= \mathcal{H}_n(D).
\end{eqnarray*}
Hence, the statement holds for every $n \geq 2$.
\end{proof}

We are now ready to prove the main result of this section.

\begin{theorem} \label{induced injective map}
Let $C$ be a ribbon concordance from a link $L_0$ to a link $L_1$. Then the induced maps on $\mathfrak{sl}(n)$ homologies
\[ \mathcal{H}_n (C) :  \mathcal{H}_n (L_0) \to  \mathcal{H}_n (L_1) \]
are injective, for all $n \geq 2$. 
\end{theorem}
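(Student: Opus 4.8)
The plan is to mimic Levine and Zemke's argument for Khovanov homology, using the ``tubing in unknotted spheres'' description of $\overline{C}\circ C$ together with the functoriality of the $\mathfrak{sl}(n)$ foam homologies and the computations assembled in Proposition~\ref{cutting annulus}. The essential point is that a retraction argument suffices: if the composite $\mathcal{H}_n(\overline{C}) \circ \mathcal{H}_n(C) = \mathcal{H}_n(\overline{C}\circ C)$ equals (up to a unit in the ground ring) the identity map on $\mathcal{H}_n(L_0)$, then $\mathcal{H}_n(C)$ must be injective.

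Here are the steps in order. First, I would invoke Zemke's movie presentation of $\overline{C}\circ C$ recalled in the introduction to this section: it is the identity concordance $L_0 \times [0,1]$ with $n$ unknotted, unlinked $2$-spheres $S_1,\dots,S_n$ tubed in along arcs $\gamma_1,\dots,\gamma_n$, each $\gamma_i$ running from $L_0\times[0,1]$ to $S_i$. Second, I would apply Proposition~\ref{cutting annulus} iteratively, once for each sphere--tube pair $(S_i, T_i)$: starting from $D = L_0\times[0,1]$ (whose induced map is, up to a unit, the identity), each application shows that attaching the next sphere together with its tube does not change the induced homology map up to a unit. After $n$ applications we conclude $\mathcal{H}_n(\overline{C}\circ C) = u \cdot \mathrm{id}_{\mathcal{H}_n(L_0)}$ for some unit $u$ in the ground ring (the unit arising from the functoriality-up-to-a-unit of the theory and, in the $\mathfrak{sl}(3)$ case, from the explicit sign in Lemma~\ref{spheres-cob}(b)). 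Third, I would use functoriality under composition of cobordisms, $\mathcal{H}_n(\overline{C}\circ C) = \mathcal{H}_n(\overline{C})\circ \mathcal{H}_n(C)$ (again valid up to a unit, which is harmless), to factor this identity-up-to-unit map through $\mathcal{H}_n(C)$. Since a map that has a left inverse up to multiplication by a unit is injective, we obtain that $\mathcal{H}_n(C) \colon \mathcal{H}_n(L_0)\to\mathcal{H}_n(L_1)$ is injective for every $n\ge 2$.

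One technical point to address carefully is the identification $F^{(i,j)} = D^{(i)} \cup S^{(j)}$ used in Proposition~\ref{cutting annulus} when $D$ is not merely the identity concordance but has already had some spheres and tubes attached; here one should check that the arc $\gamma_{i+1}$ and its tube can be isotoped into a small ball meeting the partially-built cobordism in the standard local picture, so that the cutting-neck relation applies verbatim. This is straightforward because the spheres are unlinked and unknotted and the tubes can be taken to be disjoint and thin. A second point is bookkeeping of the grading/degree shifts: the tubing operations change the Euler characteristic of the cobordism, so the induced maps a priori land in shifted copies of $\mathcal{H}_n(L_0)$; one must confirm that after all $n$ sphere-tube attachments the total shift is zero, which follows because each $S_i$ is a sphere (Euler characteristic $2$) and each tube $T_i$ contributes the opposite correction, matching the fact that $\overline{C}\circ C$ is a genuine concordance and hence degree-preserving.

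The main obstacle I anticipate is not conceptual but organizational: ensuring that the ``up to a unit'' ambiguities coming from (i) functoriality of $\mathcal{H}_n$ up to a unit, (ii) the explicit minus sign in the $\mathfrak{sl}(3)$ sphere relation, and (iii) any sign appearing when reversing/mirroring $C$ to form $\overline{C}$, all combine into a single global unit that does not obstruct the retraction argument. Since units in the ground ring are invertible, none of these affect injectivity, so the argument goes through uniformly for all $n\ge 2$; one simply has to state the bookkeeping cleanly rather than track the units precisely. A secondary, purely expository obstacle is that the proof should be phrased so that it applies simultaneously to the universal $\mathfrak{sl}(2)$ and $\mathfrak{sl}(3)$ theories and to the $\mathfrak{sl}(n)$ theories for $n\ge 4$, which is handled by quoting Proposition~\ref{cutting annulus}, already proved in this uniform generality.
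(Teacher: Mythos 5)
Your proposal is correct and follows essentially the same route as the paper: decompose $\overline{C}\circ C$ via Zemke's tubing description, apply Proposition~\ref{cutting annulus} (iteratively, once per sphere--tube pair) to identify $\mathcal{H}_n(\overline{C}\circ C)$ with the identity up to a unit, and conclude injectivity of $\mathcal{H}_n(C)$ from the existence of a left inverse up to a unit. The additional technical points you flag (iterating the proposition on a partially built cobordism, and the degree bookkeeping) are handled implicitly in the paper and do not change the argument.
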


\begin{proof}
Let $C$ be a ribbon concordance from $L_0$ to $L_1$, and let $\overline{C}$ be the mirror image of $C$ (that is, $\overline{C}$ is the reverse concordance from $L_1$ to $L_0$). Let $D: = \overline{C}  \circ C $. Then $D$ is a concordance from $L_0$ to itself. Since for each $n \geq 2$, the foam homology theory $\mathcal{H}_n$ is a functor, we have that:
\[ \mathcal{H}_n (D) = \mathcal{H}_n (\overline{C}) \circ \mathcal{H}_n(C), \, \text{for each} \, n \geq 2. \]

By the discussion at the beginning of this section, we know that the concordance $D$ can be obtained by taking the identity concordance $L_0 \times [0, 1]$ and ``tubing in'' unknotted, unlinked $2$-spheres $S_1, \dots, S_n$ using embedded annuli $T_1, \dots, T_n$. These annuli are the boundaries of embedded 3-dimensional 1-handles $h_1, \dots, h_i$ in $\mathbb{R}^3 \times[0, 1]$, where each $h_i$ connects $L_0 \times [0, 1]$ with $S_i$ and is disjoint from $S_j$, for $j \neq i$. Then, by Proposition~\ref{cutting annulus} and the functoriality properties of the corresponding foam homology theories, we get:

\[\mathcal{H}_2(D) = \mathcal{H}_2(L_0 \times [0, 1]) =  \text{id}_{\mathcal{H}_2(L_0)},\]
\[\mathcal{H}_3(D) = \pm \mathcal{H}_3(L_0 \times [0, 1]) = \pm \text{id}_{\mathcal{H}_3(L_0)}, \,\, \text{and}\]
\[\mathcal{H}_n(D) = q \, \mathcal{H}_n(L_0 \times [0, 1]) = q \, \text{id}_{\mathcal{H}_n(L_0)}, \,\, \text{for all} \,\, n \geq 4,\]
where $q \in \mathbb{Q}^*$.
 Therefore,
\[ \mathcal{H}_2(\overline{C}) \circ \mathcal{H}_2(C) =  \text{id}_{\mathcal{H}_2(L_0)},  \,\,  \mathcal{H}_3(\overline{C}) \circ \mathcal{H}_3(C) = \pm  \text{id}_{\mathcal{H}_3(L_0)}, \, \, \text{and} \]
\[ \mathcal{H}_n(\overline{C}) \circ \mathcal{H}_n(C) = q \,   \text{id}_{\mathcal{H}_n(L_0)}, \,\, \text{for some} \, q \in \mathbb{Q}^*.\]
In all of the above cases, the composition $\mathcal{H}_n(\overline{C}) \circ \mathcal{H}_n(C)$ is a bijective function, for each $n \geq 2$. 
Hence for each $n \geq 2$, $\mathcal{H}_n(C)$ is an injective map and $\mathcal{H}_n(\overline{C})$ is surjective. 
\end{proof}

As a consequence of Theorem~\ref{induced injective map}, we obtain that the homology theories $\mathcal{H}_n$, for all $n \geq 2$, give obstructions to ribbon concordance. For any concordance $C$ between links and any $n \geq 2$, the map $\mathcal{H}_n(C)$ preserves both the quantum and homological grading. Then the proof of the theorem implies that for any bigrading $(i, j)$ and $n \geq 2$, $\mathcal{H}_n^{i, j}(L_0)$ embeds in $\mathcal{H}_n^{i, j}(L_1)$ as a direct summand. 

\section{Gordian distance and spectral sequences in Khovanov homology}
\label{section:Spectral}

Lee \cite{Lee:Endo} defined an endomorphism of the Khovanov homology of a knot with coefficients in $\mathbb{Q}$, and Rasmussen \cite{Ras10} showed that Lee's endomorphism gives rise to a spectral sequence, called the Lee spectral sequence, whose $E_1$ page is isomorphic to the Khovanov homology of the knot. Shumakovitch \cite{Shumakovitch:Torsion} defined a version of Lee's spectral sequence with coefficients in the finite field $\mathbb{F}_p$ of order $p$, for an odd prime $p$. We refer to the above spectral sequences as the Lee spectral sequence with $R$ coefficients, where $R$ is either $\mathbb{Q}$ or $\mathbb{F}_p$ for an odd prime $p$.  A spectral sequence \textit{collapses} at the $k$th page if $E_{k-1}\neq E_k$ and $E_k=E_m$ for all $m\geq k$. When $R=\mathbb{Q}$ or $\mathbb{F}_p$, define $\pLee(K;R)$ to be the page at which the Lee spectral sequence with $R$ coefficients collapses. Similarly, Bar-Natan \cite{BN05} defined a variant of Khovanov homology with $\mathbb{F}_2$ coefficients. Turner \cite{Turner:BN} showed that Bar-Natan's variant gives rise to a spectral sequence similar in spirit to the Lee spectral sequence. Define $\pBN(K)$ to be the page at which the Bar-Natan spectral sequence collapses.

The \textit{Gordian distance} $d(K_1,K_2)$ between two knots $K_1$ and $K_2$ is the minimum number of crossing changes necessary to transform $K_1$ into $K_2$. The most famous Gordian distance is the \textit{unknotting number} $u(K)$ of a knot $K$, which is the Gordian distance between $K$ and the unknot. Kawauchi \cite{Kawauchi:Alternation} similarly defined the \textit{alternation number} $\alt(K)$ of a link $K$ to be the minimum Gordian distance between $K$ and the set of alternating knots. The Khovanov homology $Kh(K;R)$ of a knot over $R$ is \textit{homologically thin} if there is an integer $s$ such that $Kh^{i,j}(K;R)=0$, for $j-2i\neq s\pm 1$; that is, $Kh(K;R)$ is homologically thin if $Kh(K;R)$ is supported entirely in two adjacent diagonals $j-2i = s\pm1$. Define $\dthin(K;R)$ to be the minimum Gordian distance between $K$ and the set of knots that have thin Khovanov homology over $R$. Because every alternating link has thin Khovanov homology over $R$, for all rings $R$ that we consider, it follows that $\dthin(K;R)\leq \alt(K)$.

This section is organized as follows: the results in Subsection \ref{s41} are followed by examples in Subsection \ref{s42}, which illuminate the proofs provided in Subsection \ref{s43}. 

\subsection{Results}\label{s41}

For any real number $x$, define $\lceil x \rceil$ to be the ceiling of $x$; that is, $\lceil x \rceil$ is the least integer that is greater than or equal to $x$.  The next two results relate $\dthin(K;R)$ and $\alt(K)$ with the pages $\pLee(K;R)$, $\pBN(K)$ at which the Lee and Bar-Natan spectral sequences collapse.

\begin{theorem}
\label{theorem:LeeMain}
Let $K$ be a knot, and let $R$ be $\mathbb{Q}$ or $\mathbb{F}_p$, where $p$ is an odd prime. Then
\begin{equation}\label{ineq:LeeBound}
\pLee(K;R) \leq  \left\lceil \frac{\dthin(K;R)+3}{2}\right\rceil \leq \left\lceil \frac{\alt(K)+3}{2}\right\rceil.
\end{equation}
\end{theorem}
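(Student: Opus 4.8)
The second inequality in \eqref{ineq:LeeBound} is immediate: the excerpt already records $\dthin(K;R)\le\alt(K)$, and $x\mapsto\lceil(x+3)/2\rceil$ is non-decreasing. I would prove the first inequality by induction on $d:=\dthin(K;R)$, following the template of Alishahi--Dowlin's unknotting-number bound. For the base case $d=0$, $K$ is thin over $R$, so $Kh(K;R)$ is supported on two adjacent diagonals $j-2i\in\{s-1,s+1\}$. In the Lee spectral sequence over $R\in\{\bb Q,\bb F_p\}$, whose $E_1$ page is $Kh(K;R)$, every differential raises the homological grading by $1$ and changes the diagonal grading $j-2i$ by an even integer, namely $2$ for the first differential (a knight move) and strictly more than $2$ for each later one. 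Since every page is still supported on those two diagonals, all higher differentials vanish, so $E_2=E_\infty$; and since $Kh_{\mathrm{Lee}}(K;R)\cong R^2$ for a knot (Lee over $\bb Q$, Shumakovitch over $\bb F_p$) the spectral sequence does stabilise. Hence $\pLee(K;R)\le 2=\lceil(0+3)/2\rceil$; this is the classical reason thin knots satisfy the Knight Move conjecture.

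For the inductive step, pick a crossing of a diagram of $K$ whose change yields a knot $K'$ with $\dthin(K';R)\le d-1$. Writing $K_0,K_1$ for the two resolutions of that crossing, one has filtered mapping-cone descriptions $C(K)\simeq\mathrm{Cone}(s\colon C(K_0)\to C(K_1))$ and $C(K')\simeq\mathrm{Cone}(s'\colon C(K_1)\to C(K_0))$ (with appropriate grading shifts), where $s,s'$ are the saddle maps at the crossing, and the round trip $s'\circ s$ is filtered-chain-homotopic to multiplication by $2X$ --- an endomorphism of quantum degree $-2$ that becomes invertible in Lee homology because $2$ is a unit in $R$ and $X$ is invertible in the Lee deformation of the Frobenius algebra. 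This ``invertibility up to a quantum shift of $2$'' is what propagates through the Lee spectral sequences of $K$, $K_0$, $K_1$, and $K'$, by a filtered mapping-cone argument in the spirit of Alishahi--Dowlin. Iterating over the $d$ crossing changes that carry $K$ to a thin knot accumulates a quantum shift of $2d$, while consecutive differentials $d_r$ of the Lee spectral sequence differ in quantum degree by $4$; comparing these accounts for the factor of $2$ and yields, together with the base case, $\pLee(K;R)\le\lfloor d/2\rfloor+2=\lceil(d+3)/2\rceil$. Concretely, one would establish a two-step statement $\pLee(K;R)\le\pLee(K'';R)+1$ for knots differing by two crossing changes, together with the $d=1$ case $\dthin(K;R)\le 1\Rightarrow\pLee(K;R)\le 2$, and then iterate.

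The main obstacle is making this last bookkeeping precise --- extracting from the relation $s'\circ s\simeq 2X$ (and its analogue after $d$ crossing changes) exactly which pages of the Lee spectral sequences of the intermediate knots must coincide, so that one gains a page only every two crossing changes and not every one; the soft estimate ``one crossing change, one page'' gives only the weaker bound $\pLee(K;R)\le\dthin(K;R)+2$. A secondary point is the $\dthin\le 1$ case, where one must upgrade a generic ``$\le 3$'' estimate to ``$\le 2$'' by using that the neighbouring thin complex is confined to two diagonals. The examples in Subsection~\ref{s42} presumably carry out $d=1$ and $d=2$ by hand and make this bookkeeping transparent.
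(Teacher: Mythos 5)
Your second inequality and your base case are fine, and your overall strategy (reduce to a thin knot by $d$ crossing changes, invoke the Alishahi--Dowlin crossing-change machinery, and locate the factor of $2$ in the mismatch between the quantum degree of $X$ and the degree $4r$ of the page-$r$ Lee differential) is exactly the right one. But the step you yourself flag as the ``main obstacle'' is a genuine gap: as written, you only have the soft estimate $\pLee(K;R)\leq \dthin(K;R)+2$, and no mechanism for gaining a page only every \emph{two} crossing changes. Trying to compare pages of the Lee spectral sequences of $K$ and $K'$ directly across a crossing change is the wrong level at which to run the induction, and your proposed fix (a two-step statement $\pLee(K;R)\leq\pLee(K'';R)+1$ for knots two crossing changes apart) is not what one proves.

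The way to close the gap is to change the invariant being tracked. Alishahi and Dowlin work with the deformed complex over $R[X,t]/(X^2-t)$ with differential $d+t\,\dlee$, and define two torsion orders in its homology: $\mathfrak{u}_X(K;R)$ (maximal order of $X$-torsion) and $\mathfrak{u}_t(K;R)$ (maximal order of $t$-torsion). The crossing-change inequality is proved for $\mathfrak{u}_X$, not for the page of collapse: the relation $s'\circ s\simeq 2X$ you mention yields $|\mathfrak{u}_X(K_+;R)-\mathfrak{u}_X(K_-;R)|\leq 1$, so $\mathfrak{u}_X(K;R)\leq d+\mathfrak{u}_X(K_d;R)=d+1$ (using that thin knots have $\mathfrak{u}_X=1$). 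The halving then comes for free from the purely algebraic identity $\mathfrak{u}_t(K;R)=\lceil \mathfrak{u}_X(K;R)/2\rceil$, a consequence of $t=X^2$, and finally $\pLee(K;R)=\mathfrak{u}_t(K;R)+1\leq\lceil(d+1)/2\rceil+1=\lceil(d+3)/2\rceil$. No induction on $d$ and no page-by-page comparison of spectral sequences is needed; the only new observation required beyond \cite{AD:Lee} is that all four of these facts carry over verbatim from $\bb{Q}$ to $\bb{F}_p$ for odd $p$.
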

\begin{theorem}
\label{theorem:BNMain}
Let $K$ be a knot. Then
\begin{equation}\label{T3}
\pBN(K)\leq \dthin(K;\mathbb{F}_2)+2\leq \alt(K)+2.
\end{equation}

\end{theorem}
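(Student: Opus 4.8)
The second inequality in \eqref{T3} is immediate from the bound $\dthin(K;\mathbb{F}_2)\le\alt(K)$ recorded above, so the task is to prove $\pBN(K)\le\dthin(K;\mathbb{F}_2)+2$. The plan is to deduce this from two facts: (i) if $K'$ is a knot whose Khovanov homology over $\mathbb{F}_2$ is thin, then $\pBN(K')\le 2$; and (ii) a single crossing change changes the collapse page of the Bar--Natan spectral sequence by at most one, i.e.\ if $K_-$ and $K_+$ differ by one crossing change, then $\pBN(K_+)\le\pBN(K_-)+1$. Granting these, choose a knot $K'$ with thin $Kh(K';\mathbb{F}_2)$ realizing the Gordian distance $d:=\dthin(K;\mathbb{F}_2)$, fix a sequence of $d$ crossing changes converting $K'$ into $K$, and apply (ii) successively along this sequence to obtain $\pBN(K)\le\pBN(K')+d\le d+2$.

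Fact (i) is the more routine ingredient, and I expect it to be illuminated by the examples in Subsection~\ref{s42}: it is the $\mathbb{F}_2$-analogue of the classical fact that the Lee spectral sequence of a thin knot degenerates at the $E_2$ page. Since $Kh(K';\mathbb{F}_2)$ is supported on the two adjacent diagonals $j-2i=s\pm1$ and the Bar--Natan spectral sequence converges to the rank-two Bar--Natan homology of $K'$, the only differentials compatible with this structure are knight-move differentials, and on such a thin complex these already occur on the first page, so $E_2=E_\infty$. This is the step where the hypothesis that the approximating knot be thin \emph{over $\mathbb{F}_2$}, built into the definition of $\dthin(K;\mathbb{F}_2)$, is used.

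Fact (ii) is the substance of the argument and the main obstacle; it is the relative form of Alishahi's crossing-change estimate for the Bar--Natan spectral sequence \cite{Alishahi:BN}, whose case $K'=$ unknot yields $\pBN(K)\le u(K)+1$. The plan is to use the mapping-cone relation between the Bar--Natan complexes of the two diagrams that differ at the crossing in question---equivalently, the standard genus-one cobordism between $K_-$ and $K_+$ built from two saddles through the oriented resolution of the crossing---together with its reverse. Functoriality of the Bar--Natan theory produces filtered chain maps between $C_{BN}(K_-)$ and $C_{BN}(K_+)$ in both directions, and the neck-cutting relation in the Bar--Natan Frobenius system $\mathbb{F}_2[X]/(X^2-X)$ identifies each composition with a filtration-decreasing self-map whose leading term is an isomorphism. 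A homological-algebra lemma (the Bar--Natan counterpart of the one used for Theorem~\ref{theorem:LeeMain}) then shows that two filtered complexes related by such a pair of maps have collapse pages differing by at most the amount of filtration shift, which here is exactly one per crossing change; iterating yields the theorem. The delicate points are the bookkeeping of the filtration shift contributed by the handle---this is precisely where the relation $X^2-X$, rather than Lee's $X^2-1$, blocks the factor-of-two improvement that appears in Theorem~\ref{theorem:LeeMain}---and checking that the resulting self-maps are close enough to the identity for the homological-algebra lemma to apply.
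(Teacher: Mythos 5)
Your argument is essentially the paper's: the second inequality is the recorded bound $\dthin(K;\mathbb{F}_2)\le\alt(K)$, the base case is that thin knots have $\pBN\le 2$, and the crossing-change step is exactly Alishahi's estimate, which the paper invokes through the $h$-torsion order (namely $|\mathfrak{u}_h(K_+)-\mathfrak{u}_h(K_-)|\le 1$ together with $\pBN(K)=\mathfrak{u}_h(K)+1$) rather than re-deriving it from the saddle cobordisms and neck-cutting as you propose. The one point to tighten is your justification of the base case: in the unreduced theory a $d_2$ differential from the diagonal $j-2i=s-1$ to $j-2i=s+1$ is grading-compatible, so the claim that only first-page differentials survive is cleanest via the splitting of the Bar--Natan spectral sequence into two copies of the reduced one, each supported on a single diagonal.
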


The Turaev genus of a knot is an invariant that measures how far a knot is from being alternating in a different way than the alternation number, and it is defined as follows. Each knot diagram $D$ has a Turaev surface of genus 
\[g_T(D)=\frac{1}{2}(2+c(D)-s_A(D)-s_B(D)),\]
where $c(D)$ is the number of crossings in $D$, and $s_A(D)$ and $s_B(D)$ are the number of components in the all-$A$ and, respectively, all-$B$ Kauffman states of $D$. 
The \textit{Turaev genus} $g_T(K)$ of a knot $K$ is defined as follows:
\[g_T(K) = \min \{g_T(D)~|~D~\textnormal{is a diagram of}~K\}.\]
It is known that a knot is alternating if and only if its Turaev genus is zero \cite{Turaev:Simple}.   The next result is a version of Theorems \ref{theorem:LeeMain} and \ref{theorem:BNMain}.

\begin{theorem}
\label{theorem:Turaev}
Let $R=\mathbb{Q}$ or $\mathbb{F}_p$ for an odd prime $p$. For any knot $K$,
\[
2\pLee(K;R) \leq  g_T(K)+4 
\qquad \textnormal{and}
\qquad \pBN(K) \leq  g_T(K) + 2.
\]
\end{theorem}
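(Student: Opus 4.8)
The plan is to prove both inequalities at once, by feeding a Turaev-genus bound on the $\delta$-width of Khovanov homology into the spectral-sequence machinery behind Theorems~\ref{theorem:LeeMain} and~\ref{theorem:BNMain}. Write $\delta = j - 2i$ for the diagonal grading. The key external input is the theorem of Champanerkar--Kofman--Stoltzfus and, independently, Manturov: if a knot $K$ has a diagram on a genus-$g$ Turaev surface, then its Khovanov complex is chain homotopy equivalent, over $\mathbb{Z}$ and hence over any coefficient ring, to a complex supported in $g+2$ consecutive $\delta$-diagonals. Thus $Kh(K;R)$, for $R = \mathbb{Q}$ or $\mathbb{F}_p$, occupies a band of $\delta$-diagonals whose extreme values differ by at most $2(g_T(K)+1)$.

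Next I would record the bidegrees of the relevant differentials. In the Lee spectral sequence $d_r$ has $(i,j)$-bidegree $(1,4r)$, so it raises $\delta$ by $4r-2$; in the Bar-Natan spectral sequence $d_r$ has bidegree $(1,2r)$, so it raises $\delta$ by $2r-2$. The $E_1$ page of each spectral sequence is $Kh(K;R)$ and every later page is a subquotient, so all pages lie in the same $\delta$-band; hence $d_r$ must vanish as soon as its $\delta$-shift exceeds $2(g_T(K)+1)$. For Lee this forces $d_r = 0$ whenever $4r-2 > 2(g_T(K)+1)$, i.e.\ for $r \ge \lfloor g_T(K)/2\rfloor + 2$, so $\pLee(K;R) \le \lfloor g_T(K)/2\rfloor + 2$; doubling and checking the two parities of $g_T(K)$ gives $2\pLee(K;R) \le g_T(K)+4$.

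The Bar-Natan bound is the delicate one, and I expect it to be the main obstacle. The same reach estimate only yields $d_r = 0$ for $r \ge g_T(K)+3$, hence $\pBN(K) \le g_T(K)+3$ --- one page more than claimed. To sharpen to $\pBN(K) \le g_T(K)+2$ one must show the last a priori possible differential $d_{g_T(K)+2}$ vanishes; its $\delta$-shift equals the full band width, so it can only pair classes in the two extreme diagonals, and one argues this is already impossible on the $E_{g_T(K)+2}$ page --- for example by using that the $E_\infty$ page of the Bar-Natan spectral sequence of a knot has total rank $2$ and occupies just two diagonals, and tracking how the extreme diagonals get cancelled by the earlier $d_r$; or by viewing $BN(K;\mathbb{F}_2)$ as a module over $\mathbb{F}_2[H]$ and bounding its torsion order by the band width rather than by one more. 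An alternative, if one can establish the comparison $\dthin(K;R) \le g_T(K)$ (e.g.\ by a handle-compression argument on a genus-minimizing Turaev surface), is to quote Theorems~\ref{theorem:BNMain} and~\ref{theorem:LeeMain} directly, using $2\lceil (g_T(K)+3)/2\rceil \le g_T(K)+4$.

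Finally, a point meriting a sentence of justification is that both ingredients persist with $\mathbb{F}_p$ coefficients for odd $p$: the Champanerkar--Kofman--Stoltzfus/Manturov width bound does because the chain homotopy equivalence is defined over $\mathbb{Z}$, and the $(1,4r)$ bidegree of Shumakovitch's $\mathbb{F}_p$-Lee differential does because that deformation still shifts the quantum grading by $4$. With these in place, the Lee inequality and the overall architecture are routine; the content is concentrated in the width bound and in nailing the Bar-Natan constant.
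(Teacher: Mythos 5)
Your Lee inequality is exactly the paper's argument: quote the Champanerkar--Kofman--Stoltzfus width bound $w(Kh(K;R))\leq g_T(K)+2$, note that a nonvanishing $d_r$ of bidegree $(1,4r)$ forces a $\delta$-spread of at least $4r-2$, and do the arithmetic; your remark that both inputs survive the passage to $\mathbb{F}_p$ is also the paper's (implicit) position. The problem is the Bar-Natan half. You correctly diagnose that the bare reach estimate only yields $\pBN(K)\leq g_T(K)+3$, but you do not actually close the remaining page: you list three candidate strategies without carrying any of them out, and none of them is the mechanism that makes the bound work. The ``track how the extreme diagonals cancel'' and ``bound the $\mathbb{F}_2[h]$-torsion order by the band width minus one'' suggestions are restatements of the desired conclusion rather than arguments --- for a thin knot, for instance, the $E_2$ differential has $\delta$-shift $2$, which fits inside the two-diagonal band, so nothing in the unreduced degree count rules it out. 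Your fallback of proving $\dthin(K;R)\leq g_T(K)$ and citing Theorems~\ref{theorem:LeeMain} and~\ref{theorem:BNMain} is explicitly unavailable: the paper points out that Theorem~\ref{theorem:Turaev} does not follow from those results, precisely because no such comparison between Gordian-type distances and Turaev genus is known (there are conjecturally Turaev genus one knots arbitrarily far from alternating knots in Gordian distance).

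The missing idea is Turner's splitting: over $\mathbb{F}_2$ the Bar-Natan spectral sequence decomposes as two quantum-shifted copies of the reduced Bar-Natan spectral sequence, $Kh^{i,j}(K;\mathbb{F}_2)\cong \widetilde{Kh}^{i,j-1}(K;\mathbb{F}_2)\oplus \widetilde{Kh}^{i,j+1}(K;\mathbb{F}_2)$, compatibly with all differentials. Hence a nonzero $d_{n-1}$ in the unreduced theory forces a nonzero $d_{n-1}$ in the reduced theory, whose width is $w(Kh(K;\mathbb{F}_2))-1\leq g_T(K)+1$; running your reach estimate there (a nonzero reduced $d_{n-1}$ needs $\delta$-spread at least $2(n-1)-2$, i.e.\ reduced width at least $n-1$) gives $n-1\leq g_T(K)+1$, which is the claimed $\pBN(K)\leq g_T(K)+2$. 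Equivalently, the splitting shows the unreduced homology occupies two diagonals more than it ``needs'' to, which is exactly the one-page improvement you were missing. Without invoking this (or an equivalent structural fact), your proof establishes only the weaker bound $\pBN(K)\leq g_T(K)+3$.
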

There are knots with arbitrarily large Turaev genus and alternation number one \cite{Lowrance:AltDist}. Also, there are knots with Turaev genus one that are conjectured to have arbitrarily large alternation number, and the existence of such knots would show that Theorem \ref{theorem:Turaev} does not immediately follow from Theorems \ref{theorem:LeeMain} and \ref{theorem:BNMain}.

We first give examples of how Theorems \ref{theorem:LeeMain} and \ref{theorem:BNMain} can be used, and then we prove each result.

\subsection{Examples}\label{s42}

Either side of the inequalities in Theorems \ref{theorem:LeeMain} and \ref{theorem:BNMain} can provide insight into the other. Example \ref{example:AH} gives a family of knots all of whose alternation numbers are one, but whose Khovanov homology becomes more and more complicated in terms of width. Despite having complicated Khovanov homology, Theorem \ref{theorem:LeeMain} implies that the Lee spectral sequence for this family of knots collapses at or before the second page, and Theorem \ref{theorem:BNMain} implies that the Bar-Natan spectral sequence collapses at or before the third page.

We remark that Alishahi and Dowlin \cite{AD:Lee} proved that if the unknotting number of a (nontrivial) knot is one or two, then the Lee spectral sequence collapses at the second page. However, many knots in Example \ref{example:AH} have unknotting number greater than two, and thus the results from \cite{AD:Lee} cannot be used for those knots.

Examples \ref{example:MM}, \ref{example:56}, and \ref{example:78} describe knots where the page at which the relevant spectral sequence collapses gives a nontrivial lower bound on the alternation number of the knot. 

Before describing the examples in detail, we remind the reader of some of the properties of the Lee and Bar-Natan spectral sequences. The map on the $E_r$ page of the Lee spectral sequence increases the homological grading by one and the polynomial grading by $4r$. Similarly, the map on the $E_r$ page of the Bar-Natan spectral sequence increases the homological grading by one and the polynomial grading by $2r$.  Khovanov homology with $\mathbb{F}_2$ coefficients splits as a direct sum of two copies of the reduced Khovanov homology with $\mathbb{F}_2$ coefficients; that is, $Kh^{i,j}(K;\mathbb{F}_2) \cong \widetilde{Kh}^{i,j-1}(K;\mathbb{F}_2)\oplus\widetilde{Kh}^{i,j+1}(K;\mathbb{F}_2)$. The Bar-Natan spectral sequence has this same behavior of splitting into two copies; see \cite{Turner:BN} for details.

\begin{example}
\label{example:AH}
For any pair of positive integers $m$ and $n$, de los Angeles Hernandez \cite{Hernandez:Alternation} constructed the hyperbolic knot $K(m,n)$ whose diagram is depicted in Figure \ref{figure:K(m,n)} and whose alternation number is one. Therefore, Theorem \ref{theorem:LeeMain} implies that the Lee spectral sequence of $K(m,n)$ collapses at or before the second page, and Theorem \ref{theorem:BNMain} implies that the Bar-Natan spectral sequence of $K(m,n)$ collapses at or before the third page.

Moreover, the width of the Khovanov homology of $K(m,n)$, that is the fewest number of adjacent $j-2i$ diagonals supporting $Kh(K(m,n))$, is $n+2$ \cite[Lemma 3.2]{Hernandez:Alternation}. 

Recall that if the unknotting number of a (nontrivial) knot is one or two, then the Lee spectral sequence collapses at the second page \cite{AD:Lee}. If $n+2<m$, then one can see that $K(m,n)$ has unknotting number greater than two, as follows. Dasbach and Lowrance \cite{DL:Turaev} proved that the signature of a knot $K$ with diagram $D$ satisfies the inequality 
\[s_A(D) -c_+(D) - 1 \leq \sigma(K) \leq -s_B(D) + c_-(D) + 1,\]
where $s_A(D)$ and $s_B(D)$ are the number of components in the all-$A$ and all-$B$ Kauffman states, respectively, and $c_+(D)$ and $c_-(D)$ are the number of positive and negative crossings in $D$. Applying this inequality to the diagram of $K(m,n)$, we see that $-2m-2n\leq \sigma(K) \leq -2m +2n$. Because $|\sigma(K)| \leq 2u(K)$, if $n+2< m$, then $u(K(m,n))> 2$. Hence, the result in \cite{AD:Lee} cannot be used for knots $K(m,n)$ with $n+2< m$.
\end{example}

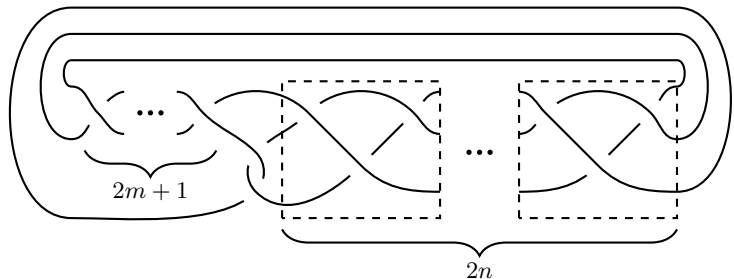
\begin{figure}
\[\begin{tikzpicture}[scale=.7, thick]

\def\gap{.2cm}

\coordinate (A) at (0,2);
\coordinate (A1) at (.5,2.4);
\coordinate (A2) at (.5,1.6);
\coordinate (B) at (2,2);
\coordinate (B1) at (1.5,2.4);
\coordinate (B2) at (1.5,1.6);
\coordinate (C) at (3,1.25);
\coordinate (D) at (3,.5); 
\coordinate (E) at (4,2);
\coordinate (F) at (5,1);
\coordinate (G) at (6,2);
\coordinate (H) at (8.5,2);
\coordinate (I) at (9.5,1);
\coordinate (J) at (10.5,2);
\coordinate (L1) at (-.5,0);
\coordinate (L2) at (-.5,1.5);
\coordinate (L3) at (-.5,2.5);
\coordinate (L4) at (-.5,3);
\coordinate (L5) at (-.5,3.5);
\coordinate (L6) at (-.5,4);
\coordinate (R1) at (11,.5);
\coordinate (R2) at (11,1.5);
\coordinate (R3) at (11,2.5);
\coordinate (R4) at (11,3);
\coordinate (R5) at (11,3.5);
\coordinate (R6) at (11,4);
\coordinate (M1) at (6.5,.5);
\coordinate (M2) at (6.5,1.6);
\coordinate (M3) at (6.5,2.4);
\coordinate (M4) at (8,.5);
\coordinate (M5) at (8,1.6);
\coordinate (M6) at (8,2.4);

\fill (1,2) circle (.05cm);
\fill (.8,2) circle (.05cm);
\fill (1.2,2) circle (.05cm);

\fill (7.25,1.25) circle (.05cm);
\fill (7.05,1.25) circle (.05cm);
\fill (7.45,1.25) circle (.05cm);

\draw[dashed] (3.5,0) rectangle (6.5,2.6);
\draw[dashed, xshift = 4.5cm] (3.5,0) rectangle (6.5,2.6);

\draw [
    thick,
    decoration={
        brace,
        mirror,
        amplitude = 10pt,
        raise=0.5cm
    },
    decorate
] (-.25,2) -- (2.25,2);
\draw (1,.5) node{\footnotesize{$2m+1$}};

\draw [
    thick,
    decoration={
        brace,
        mirror,
        amplitude = 10pt,
        raise=0.5cm
    },
    decorate
] (3.5,.5) -- (11,.5);
\draw (7.25,-1) node{\footnotesize{$2n$}};

\begin{scope}
	\begin{pgfinterruptboundingbox} 
		
		\path [invclip] (I) circle (\gap);
	\end{pgfinterruptboundingbox}
	\draw[thick] (I) to [out = 225, in = 0] (M4); 
	\end{scope}

\begin{scope}
	\begin{pgfinterruptboundingbox} 
		
		\path [invclip] (G) circle (\gap);
	\end{pgfinterruptboundingbox}
	\draw[thick] (G) to [out = 45, in = 180] (M3); 
	\end{scope}

\begin{scope}
	\begin{pgfinterruptboundingbox} 
		
		\path [invclip] (F) circle (\gap);
		\path [invclip] (G) circle (\gap);
	\end{pgfinterruptboundingbox}
	\draw[thick] (F) to [out = 45, in = 225] (G); 
	\end{scope}
	
\begin{scope}
	\begin{pgfinterruptboundingbox} 
		
		\path [invclip] (E) circle (\gap);
	\end{pgfinterruptboundingbox}
	\draw[thick] (E) to [out = 45, in = 135] (G) to [out = -45, in = 180] (M2); 
	\end{scope}

\begin{scope}
	\begin{pgfinterruptboundingbox} 
		
		\path [invclip] (C) circle (\gap);
		\path [invclip] (E) circle (\gap);
	\end{pgfinterruptboundingbox}
	\draw[thick] (C) to [out = 45, in = 225] (E); 
	\end{scope}

\begin{scope}
	\begin{pgfinterruptboundingbox} 
		
		\path [invclip] (C) circle (\gap);
		\path [invclip] (F) circle (\gap);
	\end{pgfinterruptboundingbox}
	\draw[thick] (C) to [out = 225, in = 135] (D) to [out =-45, in =225](F);
	\end{scope}

\begin{scope}
	\begin{pgfinterruptboundingbox} 
		
		\path [invclip] (B) circle (\gap);
	\end{pgfinterruptboundingbox}
	\draw[thick] (B) to [out = 45, in = 135] (E) to [out = -45, in =135] (F) to [out = -45, in=180] (M1); 
	\end{scope}

\begin{scope}
	\begin{pgfinterruptboundingbox} 
		
		\path [invclip] (B) circle (\gap);
	\end{pgfinterruptboundingbox}
	\draw[thick] (B2) to [out = 0, in = 225] (B); 
	\end{scope}

\begin{scope}
	\begin{pgfinterruptboundingbox} 
		
		\path [invclip] (D) circle (\gap);
	\end{pgfinterruptboundingbox}
	\draw[thick] (D) to [out = 45, in = -45] (C) to [out = 135, in = -45] (B) to [out = 135, in =0] (B1);
	\end{scope}

\begin{scope}
	\begin{pgfinterruptboundingbox} 
		
		\path [invclip] (D) circle (\gap);
	\end{pgfinterruptboundingbox}
	\draw[thick] (M6) to [out = 0, in = 135] (H) to [out = -45, in = 135] (I) to [out = -45, in = 180] (R1) to [out= 0, in = 0] (R6) to [out = 180, in = 0] (L6) to [out = 180, in =180] (L1) to [out = 0, in = 225, looseness = .7] (D);
\end{scope}

\begin{scope}
	\begin{pgfinterruptboundingbox} 
		
		\path [invclip] (J) circle (\gap);
		\path [invclip] (I) circle (\gap);
	\end{pgfinterruptboundingbox}
	\draw[thick] (J) to [out = 225, in = 45] (I);
\end{scope}

\begin{scope}
	\begin{pgfinterruptboundingbox} 
		
		\path [invclip] (H) circle (\gap);
	\end{pgfinterruptboundingbox}
	\draw[thick] (H) to [out = 225, in = 0] (M5);
\end{scope}

\begin{scope}
	\begin{pgfinterruptboundingbox} 
		\path [invclip] (A) circle (\gap);
		\path [invclip] (H) circle (\gap);
	\end{pgfinterruptboundingbox}
	\draw[thick] (A) to [out = 225, in = 0] (L2) to [out = 180, in = 180] (L5) to [out = 0, in = 180] (R5) to [out = 0, in = 0] (R2) to [out = 180, in =-45] (J) to [out =135, in = 45, looseness = 1] (H);	
\end{scope}

\begin{scope}
	\begin{pgfinterruptboundingbox} 
		\path [invclip] (J) circle (\gap);
	\end{pgfinterruptboundingbox}
	\draw[thick] (A2) to [out = 180, in = -45, looseness = 1] (A) to [out = 135, in = 0] (L3) to [out = 180, in = 180] (L4) to [out = 0, in = 180] (R4) to [out=0, in = 0] (R3) to [out = 180, in = 45] (J);	
\end{scope}

\begin{scope}
	\begin{pgfinterruptboundingbox} 
		\path [invclip] (A) circle (\gap);
	\end{pgfinterruptboundingbox}
	\draw
	[thick] (A1) to [out = 180, in = 225, looseness = 1] (A);	
\end{scope}

\end{tikzpicture}\]
\caption{A diagram of the knot $K(m,n)$.}
\label{figure:K(m,n)}
\end{figure} 

In Examples \ref{example:MM}, \ref{example:56}, and \ref{example:78}, we show the Khovanov homology of certain knots. The number in the $(i,j)$ entry of the table in Figure \ref{figure:MM}  is the rank of $Kh^{i,j}(K;R)$. All Khovanov homology computations for these examples are obtained using the program ``JavaKh-v2" available on the Knot Atlas \cite{knotatlas}.
\begin{figure}
\includegraphics[scale=.3]{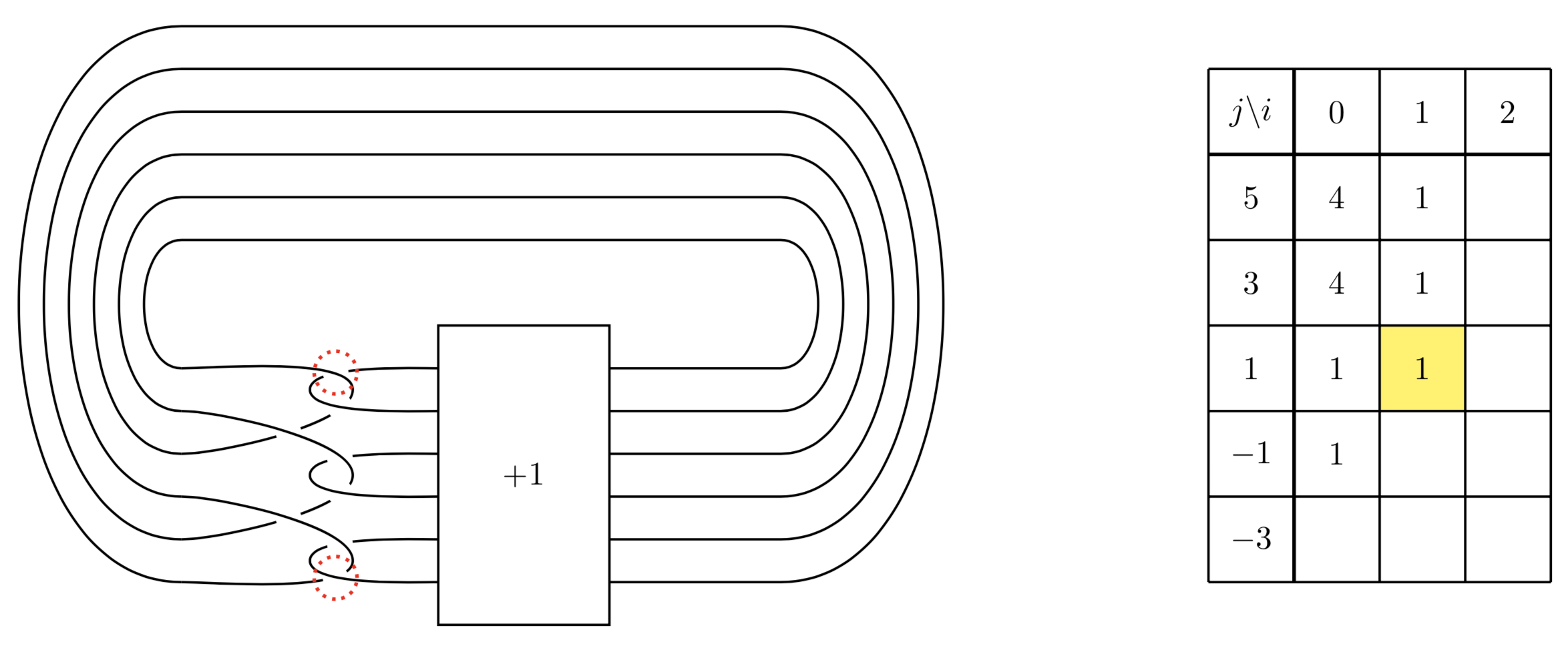}
\caption{The knot on the left has a positive full twist in the rectangle labeled $+1$. A portion of its Khovanov homology with $\mathbb{Q}$ coefficients is on the right. The highlighted yellow generator survives to the third page of the spectral sequence but not to the $E_\infty$ page.}
\label{figure:MM}
\end{figure}
\begin{example} 
\label{example:MM}
Manolescu and Marengon \cite{MM:Knight} gave an example of a knot $K$ whose Lee spectral sequence over $\mathbb{Q}$ does not collapse at the second page. This knot $K$ and a portion of its Khovanov homology $Kh(K,\mathbb{Q})$ appear in Figure \ref{figure:MM}. Because $Kh^{1,1}(K;\mathbb{Q})$ is nontrivial, while $Kh^{0,-3}(K;\mathbb{Q})$ and $Kh^{2,5}(K;\mathbb{Q})$ are trivial, it follows that $\pLee(K;\mathbb{Q})>2$. Changing the two crossings of $K$ circled in Figure \ref{figure:MM} transforms the knot into the figure-eight knot, and thus $\alt(K)\leq2$. Using now Theorem \ref{theorem:LeeMain}, it follows that $\alt(K)=2$. 
\end{example}

\begin{example}
\label{example:56}
The Lee spectral sequence for the $(5,6)$-torus knot $T_{5,6}$ with $\mathbb{Q}$ coefficients collapses at the second page; however, this is not the case when the coefficients are $\mathbb{F}_3$. Table \ref{figure:T56} shows the Khovanov homology of $T_{5,6}$ with $\mathbb{F}_3$ coefficients. Because $Kh^{13,43}(T_{5,6};\mathbb{F}_3)$ is nontrivial while $Kh^{12,39}(T_{5,6};\mathbb{F}_3)$ and $Kh^{14,47}(T_{5,6};\mathbb{F}_3)$ are trivial, it follows that $\pLee(T_{5,6};\mathbb{F}_3)>2$. Theorem \ref{theorem:LeeMain} implies that $2\leq \dthin(T_{5,6};\mathbb{F}_3) \leq \alt(T_{5,6})$. 
\end{example}

\begin{table}
\includegraphics[width=0.5\textwidth]{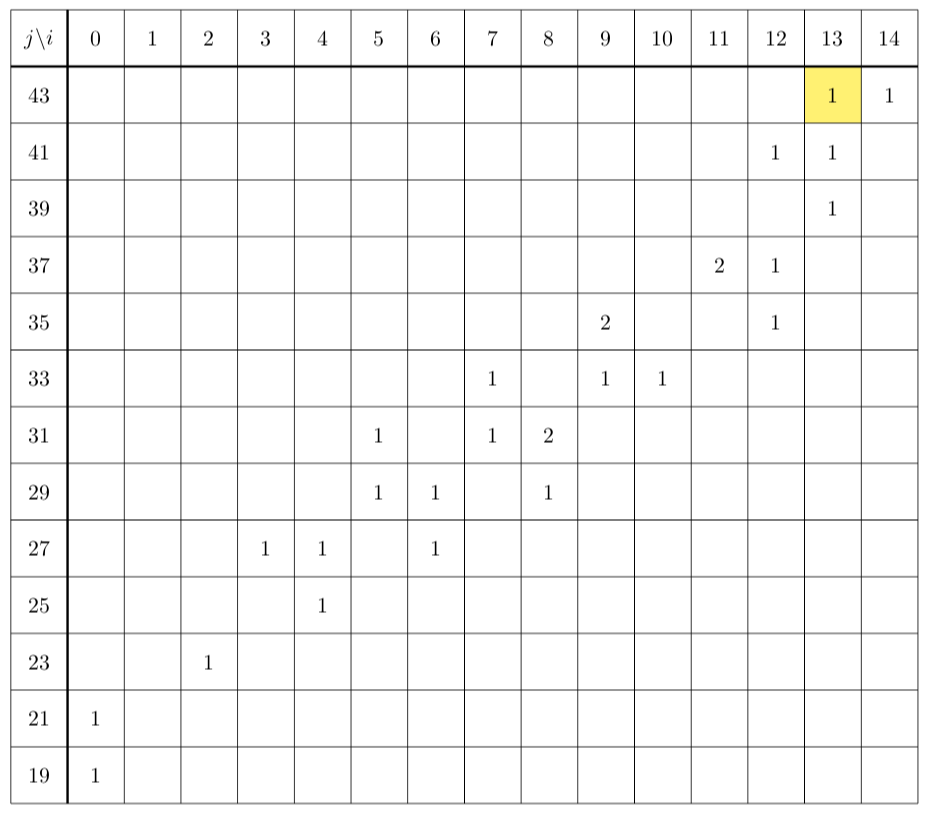}
\caption{The Khovanov homology of $T_{5,6}$ with $\mathbb{F}_3$ coefficients. The highlighted yellow generator survives to the third page of the spectral sequence, but not to the $E_\infty$ page.}
\label{figure:T56}
\end{table}

\begin{example}
\label{example:78}
The Khovanov homology of $T_{7,8}$ with $\mathbb{F}_2$ coefficients is shown in Table \ref{figure:T78}. Since $i=26$ is the maximum homological grading where $Kh^{i,j}(T_{7,8};\mathbb{F}_2)$ is nontrivial, the summands $Kh^{26,79}(T_{7,8};\mathbb{F}_2)$ and $Kh^{26,81}(T_{7,8};\mathbb{F}_2)$ must be paired with the summands $Kh^{25,75}(T_{7,8};\mathbb{F}_2)$ and $Kh^{25,77}(T_{7,8};\mathbb{F}_2)$ on the third page of Bar-Natan spectral sequence. Consequently, the summands $Kh^{25,79}(T_{7,8};\mathbb{F}_2)$ and $Kh^{25,81}(T_{7,8};\mathbb{F}_2)$ must be paired with the summands $Kh^{24,71}(T_{7,8};\mathbb{F}_2)$ and $Kh^{24,73}(T_{7,8};\mathbb{F}_2)$ on the fourth page of the Bar-Natan spectral sequence. Therefore $\pBN(T_{7,8})\geq 4$, and thus Theorem \ref{theorem:BNMain} implies that $2\leq \dthin(T_{7,8};\mathbb{F}_2)\leq \alt(T_{7,8})$.
\end{example}

\begin{table}
\includegraphics[width=0.8\textwidth]{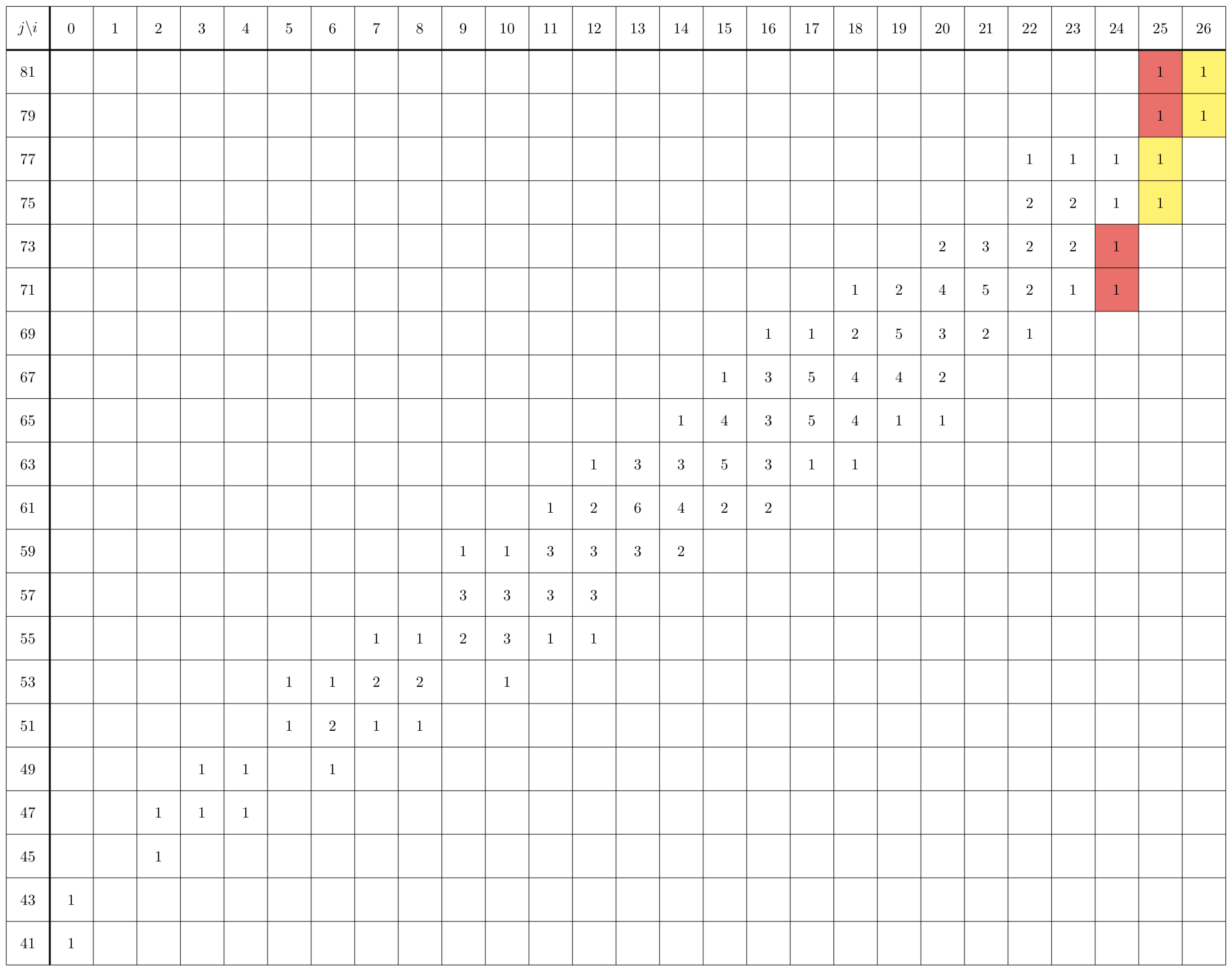}
\caption{The Khovanov homology $Kh(T_{7,8};\mathbb{F}_2)$ of $T_{7,8}$. The highlighted yellow generators survive to the $E_3$ page, and the highlighted red generators survive to the $E_4$ page of the Bar-Natan spectral sequence.}
\label{figure:T78}
\end{table}

\subsection{Proofs} \label{s43} The Lee and Bar-Natan spectral sequences both arise as spectral sequences of filtered complexes. The filtration comes from adding the Khovanov differential to different boundary maps that increase the polynomial/quantum grading. 
The Lee and Bar-Natan spectral sequences arise from maps $\dlee:CKh^{i,j}(D;R)\to CKh^{i+1,j+4}(D;R)$ and $\dBN:CKh^{i,j}(D;\mathbb{F}_2) \to CKh^{i+1,j+2}(D;\mathbb{F}_2)$, respectively. For any knot diagram $D$, the homology of $(CKh(D;R),d+\dlee)$ is isomorphic to $R\oplus R$ situated in homological grading zero, and similarly, the homology of $(CKh(D;\mathbb{F}_2),d+\dBN)$ is isomorphic to $\mathbb{F}_2\oplus\mathbb{F}_2$ situated in homological grading zero.

Bar-Natan \cite{BN05} constructed a deformation of Khovanov homology using coefficients in $\mathbb{F}_2[h]$ for a formal variable $h$ instead of $\mathbb{F}_2$ and using the differential $d+h\dBN$ instead of the usual Khovanov differential $d$. Turner later viewed the Bar-Natan construction through the lens of spectral sequences as described above. Alishahi and Dowlin \cite{AD:Lee} similarly encapsulated the Lee endomorphism as part of a deformed complex with coefficients in $\mathbb{Q}[X,t]/(X^2=t)$, where the differential in this complex is $d+ t \dlee$. Just as with Lee's endomorphism, one can replace $\mathbb{Q}$ with $\mathbb{F}_p$, for any odd $p$, and all of the results of \cite{AD:Lee} hold without changing their proofs.

An element $\alpha$ in the homology of Bar-Natan's complex is \textit{$h$-torsion of order $n$} if $h^n\alpha=0$ but $h^{n-1}\alpha\neq 0$. Let $\mathfrak{u}_h(K)$ be the maximum order of any torsion element in the homology of Bar-Natan's complex. Then $\mathfrak{u}_h(K) + 1 = \pBN(K)$ \cite[Lemma 3.2]{Alishahi:BN}. 

Similarly, an element $\alpha$ in the deformed  Lee homology over $R=\mathbb{Q}$ or $\mathbb{F}_p$, for an odd prime $p$, is \textit{$X$-torsion of order $n$} (respectively \textit{$t$-torsion of order $m$}) if $X^n\alpha = 0$ but $X^{n-1}\alpha\neq 0$ (respectively $t^m\alpha=0$ but $t^{m-1}\alpha\neq 0$).  Alishahi and Dowlin proved the following facts about $\mathfrak{u}_X(K;\mathbb{Q})$ and $\mathfrak{u}_t(K;\mathbb{Q})$. 
We observed that the proofs of these facts when $R = \mathbb{Q}$ also apply when using $\mathbb{F}_p$ coefficients. As such, we state the following for $R=\mathbb{Q}$ or $\mathbb{F}_p$, where $p$ is an odd prime.
\begin{enumerate}
\item If $Kh(K;R)$ is homologically thin, then $\mathfrak{u}_X(K;R)=1$;
\item $|\mathfrak{u}_X(K_+;R)-\mathfrak{u}_X(K_-;R)|\leq 1$, where $K_+$ and $K_-$ are knots differing by a single crossing change;
\item $\lceil \mathfrak{u}_X(K;R)/2\rceil = \mathfrak{u}_t(K;R)$, and
\item $\mathfrak{u}_t(K)+1 = \pLee(K;R)$.
\end{enumerate}

We are now in a position to prove Theorems \ref{theorem:LeeMain}, \ref{theorem:BNMain}, and \ref{theorem:Turaev}.

\begin{proof}[Proof of Theorem \ref{theorem:LeeMain}]
Let $\dthin(K;R)=d$. Hence, there is a sequence of knots $K=K_0, K_1, \dots, K_d$ such that $K_{i+1}$ is obtained from $K_i$ via a crossing change for all $i=0,\dots, d-1$, and $Kh(K_d;R)$ is homologically thin. Item (1) above implies that $\mathfrak{u}_X(K_d;R)=1$, and item (2) implies that $\mathfrak{u}_X(K;R)\leq d+1$. Then item (3) implies that $\mathfrak{u}_t(K;R) =\left \lceil \frac{\mathfrak{u}_X(K;R)}{2}\right\rceil\leq \left \lceil \frac{d+1}{2}\right\rceil.$ Finally, item (4) implies that $\pLee(K;R) = \mathfrak{u}_t(K;R)+1 \leq \left\lceil\frac{d+3}{2}\right\rceil$, as desired. The second inequality in the theorem follows at once from the fact that $\dthin(K;R) \leq \alt(K)$, as seen in the beginning of this section.
\end{proof}

\begin{proof}[Proof of Theorem \ref{theorem:BNMain}]
Let $\dthin(K;R)=d$. Hence there is a sequence of knots $K=K_0, K_1, \dots, K_d$ such that $K_{i+1}$ is obtained from $K_i$ via a crossing change for $i=0,\dots, d-1$, and $Kh(K_d;\mathbb{F}_2)$ is homologically thin. By Alishahi \cite{Alishahi:BN}, since $K_i$ and $K_{i+1}$ differ by a crossing change, it follows that $|\mathfrak{u}_h(K_i) - \mathfrak{u}_h(K_{i+1})|\leq 1$, and thus $\mathfrak{u}_h(K)\leq d+ \mathfrak{u}_h(K_d)$. Since $Kh(K_d;\mathbb{F}_2)$ is homologically thin, $\pBN(K_d) \leq 2$.  But $\pBN(K_d)=\mathfrak{u}_h(K_d)+1$, and thus $\mathfrak{u}_h(K_d)\leq 1$.  It follows that $\mathfrak{u}_h(K)\leq d+ 1$, and therefore $\pBN(K)\leq d+2$, as desired.
\end{proof}

\begin{proof}[Proof of Theorem \ref{theorem:Turaev}]
The \textit{width} $w(Kh(K;R))$ of the Khovanov homology over a ring $R$ is defined as
\[w(Kh(K;R)) = 1 + \frac{1}{2}\left(\max\{j-2i~|~Kh^{i,j}(K;R)\neq 0\} - \min\{j-2i~|~Kh^{i,j}(K;R)\neq 0\}\right).\]
Champanerkar, Kofman, and Stoltzfus \cite{CKS:Turaev} proved that $w(Kh(K;R))\leq g_T(K)+2$. Since the Lee differential on the $E_r$ page increases the homological grading $i$ by one and the polynomial grading $j$ by $4r$, if $\pLee(K;R)=n$, then $w(Kh(K;R))\geq2n-2$. Therefore $2\pLee(K;R) \leq g_T(K)+4$, as desired. Similarly, since the Bar-Natan differential on the $E_r$ page increases the homological grading by one and the polynomial grading by $2r$, if $\pBN(K)=n$, then $w(Kh(K;\mathbb{F}_2))\geq n$. Therefore, $\pBN(K)\leq g_T(K)+2$.
\end{proof}

\bibliographystyle{plain}
\bibliography{references}

\begin{thebibliography}{10}

\bibitem{Alishahi:BN}
Akram Alishahi.
\newblock Unknotting number and {K}hovanov homology.
\newblock {\em Pacific J. Math.}, 301(1):15--29, 2019.

\bibitem{AD:Lee}
Akram Alishahi and Nathan Dowlin.
\newblock The {L}ee spectral sequence, unknotting number, and the knight move
  conjecture.
\newblock {\em Topology Appl.}, 254:29--38, 2019.

\bibitem{Baldwin}
John~A. Baldwin.
\newblock On the spectral sequence from {K}hovanov homology to {H}eegaard
  {F}loer homology.
\newblock {\em Int. Math. Res. Not. IMRN}, (15):3426--3470, 2011.

\bibitem{BN05}
Dror Bar-Natan.
\newblock Khovanov's homology for tangles and cobordisms.
\newblock {\em Geom. Topol.}, 9:1443--1499, 2005.

\bibitem{knotatlas}
Dror Bar-Natan, Scott Morrison, and et~al.
\newblock The {K}not {A}tlas.
\newblock http://katlas.org.

\bibitem{BS:LinkSplit}
Joshua Batson and Cotton Seed.
\newblock A link-splitting spectral sequence in {K}hovanov homology.
\newblock {\em Duke Math. J.}, 164(5):801--841, 2015.

\bibitem{Blo10}
Jonathan~M. Bloom.
\newblock Odd {K}hovanov homology is mutation invariant.
\newblock {\em Math. Res. Lett.}, 17(1):1--10, 2010.

\bibitem{Caprau09}
Carmen Caprau.
\newblock The universal {$sl(2)$} cohomology via webs and foams.
\newblock {\em Topology Appl.}, 156(9):1684--1702, 2009.

\bibitem{Caprau2012}
Carmen Caprau.
\newblock On the quantum filtration of the universal {$sl(2)$} foam cohomology.
\newblock {\em J. Knot Theory Ramifications}, 21(3):1250023, 2012.

\bibitem{CKS:Turaev}
Abhijit Champanerkar, Ilya Kofman, and Neal Stoltzfus.
\newblock Graphs on surfaces and {K}hovanov homology.
\newblock {\em Algebr. Geom. Topol.}, 7:1531--1540, 2007.

\bibitem{Cl}
David Clark.
\newblock Functoriality for the {$su_3$} {K}hovanov homology.
\newblock {\em Algebr. Geom. Topol.}, 9(2):625--690, 2009.

\bibitem{CL99}
D.~Cooper and W.~B.~R. Lickorish.
\newblock Mutations of links in genus {$2$} handlebodies.
\newblock {\em Proc. Amer. Math. Soc.}, 127(1):309--314, 1999.

\bibitem{DL:Turaev}
Oliver~T. Dasbach and Adam~M. Lowrance.
\newblock Turaev genus, knot signature, and the knot homology concordance
  invariants.
\newblock {\em Proc. Amer. Math. Soc.}, 139(7):2631--2645, 2011.

\bibitem{Hernandez:Alternation}
Mar\'{\i}a de~los Angeles Guevara~Hern\'{a}ndez.
\newblock Infinite families of hyperbolic prime knots with alternation number 1
  and dealternating number {$n$}.
\newblock {\em J. Knot Theory Ramifications}, 26(10):1750055, 12, 2017.

\bibitem{DGST10}
Nathan~M. Dunfield, Stavros Garoufalidis, Alexander Shumakovitch, and Morwen
  Thistlethwaite.
\newblock Behavior of knot invariants under genus 2 mutation.
\newblock {\em New York J. Math.}, 16:99--123, 2010.

\bibitem{Gordon-ribbon}
C.McA. Gordon.
\newblock Ribbon concordance of knots in the 3-sphere.
\newblock {\em Mathematische Annalen}, 257:157--170, 1981.

\bibitem{Jones}
V.~F.~R. Jones.
\newblock Hecke algebra representations of braid groups and link polynomials.
\newblock {\em Ann. of Math. (2)}, 126(2):335--388, 1987.

\bibitem{Kang}
Sungkyung Kang.
\newblock Link homology theories and ribbon concordances.
\newblock {\em arXiv:1909.06969}, 2019.

\bibitem{KaLi}
Anton Kapustin and Yi~Li.
\newblock Topological correlators in {L}andau-{G}inzburg models with
  boundaries.
\newblock {\em Adv. Theor. Math. Phys.}, 7(4):727--749, 2003.

\bibitem{Kawauchi:Alternation}
Akio Kawauchi.
\newblock On alternation numbers of links.
\newblock {\em Topology Appl.}, 157(1):274--279, 2010.

\bibitem{Kh00}
Mikhail Khovanov.
\newblock A categorification of the {J}ones polynomial.
\newblock {\em Duke Math. J.}, 101(3):359--426, 2000.

\bibitem{Kh04}
Mikhail Khovanov.
\newblock $sl(3)$ link homology.
\newblock {\em Algebr. Geom. Topol.}, 4:1045--1081, 2004.

\bibitem{Kh06}
Mikhail Khovanov.
\newblock Link homology and {F}robenius extensions.
\newblock {\em Fund. Math.}, 190:179--190, 2006.

\bibitem{KR08}
Mikhail Khovanov and Lev Rozansky.
\newblock Matrix factorizations and link homology.
\newblock {\em Fund. Math.}, 199(1):1--91, 2008.

\bibitem{KR082}
Mikhail Khovanov and Lev Rozansky.
\newblock Matrix factorizations and link homology. {II}.
\newblock {\em Geom. Topol.}, 12(3):1387--1425, 2008.

\bibitem{KM-Khovanov-unknot}
Peter~B Kronheimer and Tomasz~S Mrowka.
\newblock {K}hovanov homology is an unknot-detector.
\newblock {\em Publications math{\'e}matiques de l'IH{\'E}S}, 113(1):97--208,
  2011.

\bibitem{Lee:Endo}
Eun~Soo Lee.
\newblock An endomorphism of the {K}hovanov invariant.
\newblock {\em Adv. Math.}, 197(2):554--586, 2005.

\bibitem{LZ}
Adam~Simon Levine and Ian Zemke.
\newblock Khovanov homology and ribbon concordance.
\newblock {\em arXiv:1903.01546}, 2019.

\bibitem{Lewark}
Lukas Lewark.
\newblock Rasmussen's spectral sequences and the {$sl_N$}-concordance
  invariants.
\newblock {\em Adv. Math.}, 260:59--83, 2014.

\bibitem{LS}
Robert Lipshitz and Sucharit Sarkar.
\newblock A {K}hovanov stable homotopy type.
\newblock {\em J. Amer. Math. Soc.}, 27(4):983--1042, 2014.

\bibitem{LS-Sq}
Robert Lipshitz and Sucharit Sarkar.
\newblock A {S}teenrod square on {K}hovanov homology.
\newblock {\em J. Topol.}, 7(3):817--848, 2014.

\bibitem{Lobb}
Andrew Lobb.
\newblock A slice genus lower bound from {${\rm sl}(n)$} {K}hovanov-{R}ozansky
  homology.
\newblock {\em Adv. Math.}, 222(4):1220--1276, 2009.

\bibitem{Lobb2}
Andrew Lobb.
\newblock A note on {G}ornik's perturbation of {K}hovanov-{R}ozansky homology.
\newblock {\em Algebr. Geom. Topol.}, 12(1):293--305, 2012.

\bibitem{Lobb-Watson-involution}
Andrew Lobb and Liam Watson.
\newblock A refinement of {K}hovanov homology.
\newblock {\em arXiv:1908.00082}, 2019.

\bibitem{LoZe}
Andrew Lobb and Raphael Zentner.
\newblock On spectral sequences from {K}hovanov homology.
\newblock arXiv:1310.7909, 2013.

\bibitem{Lowrance:AltDist}
Adam~M. Lowrance.
\newblock Alternating distances of knots and links.
\newblock {\em Topology Appl.}, 182:53--70, 2015.

\bibitem{MSV}
Marco Mackaay, Marko Sto\v{s}i\'{c}, and Pedro Vaz.
\newblock {$sl(N)$}-link homology {$(N\geq 4)$} using foams and the
  {K}apustin-{L}i formula.
\newblock {\em Geom. Topol.}, 13(2):1075--1128, 2009.

\bibitem{MV07}
Marco Mackaay and Pedro Vaz.
\newblock The universal {$ sl_3$}-link homology.
\newblock {\em Algebr. Geom. Topol.}, 7:1135--1169, 2007.

\bibitem{MM:Knight}
Ciprian Manolescu and Marco Marengon.
\newblock The {K}night {M}ove {C}onjecture is false.
\newblock {\em Proc. Amer. Math. Soc.}, 148(1):435--439, 2020.

\bibitem{Morton}
H.~R. Morton.
\newblock Mutant knots with symmetry.
\newblock {\em Math. Proc. Cambridge Philos. Soc.}, 146(1):95--107, 2009.

\bibitem{MortonRyder}
H.~R. Morton and H.~J. Ryder.
\newblock Mutants and {${\rm SU}(3)_q$} invariants.
\newblock In {\em The {E}pstein birthday schrift}, volume~1 of {\em Geom.
  Topol. Monogr.}, pages 365--381. Geom. Topol. Publ., Coventry, 1998.

\bibitem{Mor88}
H.~R. Morton and P.~Traczyk.
\newblock The {J}ones polynomial of satellite links around mutants.
\newblock In {\em Braids ({S}anta {C}ruz, {CA}, 1986)}, volume~78 of {\em
  Contemp. Math.}, pages 587--592. Amer. Math. Soc., Providence, RI, 1988.

\bibitem{OS05}
Peter Ozsv\'{a}th and Zolt\'{a}n Szab\'{o}.
\newblock On the {H}eegaard {F}loer homology of branched double-covers.
\newblock {\em Adv. Math.}, 194(1):1--33, 2005.

\bibitem{Piccirillo}
Lisa Piccirillo.
\newblock {The {C}onway knot is not slice}.
\newblock {\em Annals of Mathematics}, To appear.

\bibitem{Ras10}
Jacob Rasmussen.
\newblock Khovanov homology and the slice genus.
\newblock {\em Invent. Math.}, 182(2):419--447, 2010.

\bibitem{Rub87}
Daniel Ruberman.
\newblock Mutation and volumes of knots in {$S^3$}.
\newblock {\em Invent. Math.}, 90(1):189--215, 1987.

\bibitem{Sarkar-ribbon}
Sucharit Sarkar.
\newblock Ribbon distance and {K}hovanov homology.
\newblock {\em arXiv:1903.11095}, 2019.

\bibitem{Seed-Sq}
Cotton Seed.
\newblock Computations of the {L}ipshitz-{S}arkar {S}teenrod square on
  {K}hovanov homology.
\newblock {\em arXiv:1210.1882}, 2012.

\bibitem{Shumakovitch:Torsion}
Alexander~N. Shumakovitch.
\newblock Torsion of {K}hovanov homology.
\newblock {\em Fund. Math.}, 225(1):343--364, 2014.

\bibitem{Turaev:Simple}
V.~G. Turaev.
\newblock A simple proof of the {M}urasugi and {K}auffman theorems on
  alternating links.
\newblock {\em Enseign. Math. (2)}, 33(3-4):203--225, 1987.

\bibitem{Turner:BN}
Paul~R. Turner.
\newblock Calculating {B}ar-{N}atan's characteristic two {K}hovanov homology.
\newblock {\em J. Knot Theory Ramifications}, 15(10):1335--1356, 2006.

\bibitem{wehrli-03}
Stephan~M. Wehrli.
\newblock Khovanov homology and conway mutation.
\newblock {\em arXiv:math/0301312}, 2003.

\bibitem{Weh10}
Stephan~M. Wehrli.
\newblock Mutation invariance of {K}hovanov homology over {$\Bbb F_2$}.
\newblock {\em Quantum Topol.}, 1(2):111--128, 2010.

\bibitem{Wu}
Hao Wu.
\newblock On the quantum filtration of the {K}hovanov-{R}ozansky cohomology.
\newblock {\em Adv. Math.}, 221(1):54--139, 2009.

\bibitem{Zemke-ribbon-HFK}
Ian Zemke.
\newblock Knot {F}loer homology obstructs ribbon concordance.
\newblock {\em Ann. of Math. (2)}, 190(3):931--947, 2019.

\end{thebibliography}

\end{document}